\documentclass{article}
\usepackage[utf8]{inputenc}

\usepackage[a4paper, left=2cm, right=2cm, top=2cm]{geometry}
\usepackage{xcolor}
\usepackage{amsmath,amsfonts,amsthm,amssymb}
\newtheorem{assumption}{Assumption}
\newtheorem{theorem}{Theorem}
\newtheorem{proposition}[theorem]{Proposition}%
\newtheorem{lemma}[theorem]{Lemma}%

\usepackage{algorithm}%
\usepackage{algorithmicx}
\usepackage{graphicx}

\DeclareMathOperator*{\argmax}{arg\,max}
\DeclareMathOperator*{\argmin}{arg\,min}

\DeclareMathOperator{\closed}{clos}

\DeclareMathOperator{\conv}{conv}

\newcommand{\norm}[1]{\left\lVert#1\right\rVert}

\providecommand{\keywords}[1]
{
  \small	
  \textbf{\textit{Keywords---}} #1
}

\title{Hermite kernel surrogates for the value function of high-dimensional nonlinear optimal control problems}
\author{Tobias Ehring$^{1^{\Huge *}}$ and  Bernard Haasdonk$^{1}$  \\
        \small $^{1}$Institute of Applied Analysis and Numerical Simulation, \\  \small University of Stuttgart, Pfaffenwaldring 57, Stuttgart, 70569, Baden-Württemberg, Germany. \\
        \small $^{*}$Corresponding author. E-mail: \\ \small ehringts@mathematik.uni-stuttgart.de
}
\date{}

\begin{document}

\maketitle
  \begin{abstract}
\noindent Numerical methods for the  optimal feedback control of high-dimensional dynamical systems typically suffer from the curse of dimensionality. In the current presentation, we devise a mesh-free data-based approximation method for the value function of optimal control problems, which partially mitigates the dimensionality problem. The method is based on a greedy Hermite kernel interpolation scheme and incorporates context-knowledge by its structure. Especially, the value function surrogate is elegantly enforced to be 0 in the target state, non-negative and constructed as a correction of a linearized model. The algorithm is proposed in a matrix-free way, which circumvents the large-matrix-problem for multivariate Hermite interpolation. For finite time horizons, both convergence of the   surrogate to the value function as well as for the surrogate vs. the optimal controlled dynamical system are proven. Experiments support the effectiveness of the scheme, using among others a new academic model that has a scalable dimension and an explicitly given value function. It may also be useful for the community to validate other optimal control approaches.
  \end{abstract}
\keywords{optimal feedback control, kernel methods, Hermite-interpolation, surrogate modeling}

\section{Introduction}\label{sec1}
The feedback control of dynamical systems is an important type of automation that is indispensable in modern industrial plants, for example. Furthermore, there are applications in which the control must be optimal to minimize energy consumption or costs. An optimal feedback rule can be based on a mathematical model, an optimal control problem (OCP), which shall be in our consideration of the following form
\begin{gather}
\min_{\mathbf{u} \in \mathcal{U}_{\infty}} J(\mathbf{u}) = \min_{\mathbf{u} \in \mathcal{U}_{\infty}} \int_0^{\infty} r(\mathbf{x}(s)) + \mathbf{u}(s)^{\top}R \mathbf{u}(s) \,\text{d}s  \label{eq:MP} \\ 
\text{ subject to } \dot{\mathbf{x}}(s) = f(\mathbf{x}(s))+g(\mathbf{x}(s))\mathbf{u}(s)  \text{ and }\mathbf{x}(0) = x_0 \in \mathbb{R}^N \label{eq:ODE2}
\end{gather}
with $\mathcal{U}_{\infty} := \{ \mathbf{u}: \left[0, \infty \right) \rightarrow  \mathbb{R}^M \, \vert \, \mathbf{u}\text{ measurable} \}$ being the space of all possible control signals. Note that we indicate time-dependent scalar or vectorial functions with bold symbols to distinguish them from time-independent quantities.  The OCP has an infinite time horizon, which is typical, for example, in mathematical economics when considering economic sustainability or economic growth \cite{Sethi2021}. Moreover, the dynamics of the trajectory $\mathbf{x} : [0,\infty) \rightarrow \mathbb{R}^N$ is affine with respect to the control signal $\mathbf{u}$. This is common when the ordinary differential equation (ODE) results from a semi-discretized partial differential equation (PDE) where the controller could be an external force. The function $J:\mathcal{U}_{\infty} \rightarrow  \mathbb{R}_+$ is  the cost function, which determines the goal of optimal control. It consists of the integral over the running payoff $r \in C(\mathbb{R}^N,\mathbb{R}_+)$ and a quadratic control term, involving a positive definite matrix  $R \in \mathbb{R}^{M \times M}$. It is assumed that the preimage of $r$ for $0$ is only $0$, i.e. $r^{-1}(\{0\}) = \{0\}$ and $f(0) = 0$, leading to the desired target position in the zero state. Furthermore, as the constraints in \eqref{eq:ODE2} determine the solution of the dynamical system for a given $\mathbf{u}$, the cost function depends only on $\mathbf{u}$. The optimal process of the infinite horizon OCP is denoted by $(\mathbf{x}^*,\mathbf{u}^*)$. For a compact set of initial states $ \mathcal{A} \subset \mathbb{R}^N $ it is assumed throughout the paper that for every $ x_0 \in \mathcal{A} $ there is an optimal process. Existence results for infinite horizon OCP can be found in \cite{Dmitruk2005}. \\
\newline
Two possibilities for solving \eqref{eq:MP}--\eqref{eq:ODE2} are given by  open-loop and  closed-loop control.  In an open-loop process, the optimal signal is determined in advance as a function $\mathbf{u} : [0,\infty) \longrightarrow \mathbb{R}^M$   and no further adaptation of the signal to the current state is possible in an ongoing process, making it unsuitable for many real-world applications where unpredictable disturbances may occur. This requires a closed-loop that maps the current state to the current optimal control signal, which is in fact a much more general tool since it can be used to solve \eqref{eq:MP}--\eqref{eq:ODE2} for many initial states. The optimal feedback rule is denoted by $\mathcal{K}(\,\cdot \,;\nabla v) : \mathbb{R}^N \longrightarrow \mathbb{R}^M$. It reduces the OCP to an ODE by setting $\mathbf{u}(s) = \mathcal{K}(\mathbf{x}(s);\nabla v)$ and inserting it into \eqref{eq:ODE2}. As the notation indicates, the optimal feedback policy depends on the gradient of a function $v$ called the value function (VF). It gives the optimal costs-to-go depending on an initial state. Further, if it is available, the problem of optimal feedback control is solved. From Bellman's Dynamic Programming Principle (DPP) it can be deduced that the VF is the solution of the PDE called the Hamilton-Jacobi-Bellman (HJB) equation. However, the problem with solving the HJB equation is that classical numerical solution algorithms for PDEs, such as finite differences or finite elements, suffer from the curse of dimensionality \cite{bellman1961}, making such approaches infeasible for dimensions $N>6$. Since the ODE in the OCP often originates from a semi-discretized PDE and therefore has a high state dimension, much research has been pursued in recent years to find strategies to overcome this difficulty.\\
\newline
In many methods, a semi-Lagrangian scheme  \cite{falcone2013}  is applied to the HJB equation. Advanced techniques such as spatially adaptive sparse grid \cite{bokanowski2013} are used to avoid an impractical  full grid. In the finite horizon case, using a tree-structure \cite{alla2019A,alla2020}, a reasonable discrete domain can be constructed utilizing the underlying dynamics. This approach can also be combined with model order reduction (MOR)  \cite{alla2019B}. In the infinite horizon case, from the semi-Lagrangian point of view, but also when \eqref{eq:MP}--\eqref{eq:ODE2} is discretized in time, an iterative fixed point scheme called value iteration (VI) \cite{bellman1957B} can be motivated to solve the HJB. Convergence is guaranteed for this method  \cite{falcone1987}, but it comes at a high computational cost and slow convergence, since the contraction constant tends to $1$ as the mesh becomes finer. Most methods based on the VI differ in their ansatz to the approximant of the VF. In \cite{alla2021}, a Shepard's moving least squares approximation method using radial basis functions is utilized  to generate an interpolant. A neural network can also be used for this  as in \cite{heydari2014} and the references therein. Additional references and discussion can be found in \cite{kamalapurkar2018}. \\
Another notable iterative scheme based on the HJB equation is the policy iteration (PI) \cite{bellman1957}. This requires an approximation step for the numerical solvability of the generalized linear HJB equation, which is where the methods  differ.  In \cite{kalise2018,kalise2020,dolgov2021}, this is a Galerkin projection of the residual equation using a certain polynomial basis, which is enabled by assuming separability of the data and mitigates the curse of dimensionality. A semi-Lagrangian scheme can also perform the approximation step, as in \cite{alla2020B}, where MOR then allows applicability to high-dimensional problems. For the PI, however, convergence is only guaranteed for the stabilizing initial solution of the VF \cite{saridis1979}, which makes the combination of VI and PI interesting \cite{alla2015}.\\
Furthermore, there are many techniques for generating suboptimal feedback control, such as model predictive control (MPC), which is a very relevant feedback design, see \cite{gruene2011} for an introduction. Another common feedback controller can be provided by the linearization of the problem and  the solution of the associated algebraic Riccati equation (ARE) \cite{freeman1995}. For parameter-dependent problems, this can also be combined with reduced basis methods, see \cite{schmidt2018B}, which is interesting for multi-query scenarios. However, under controllability assumptions, this only stabilizes in a certain neighborhood around the target position. A truncated Taylor expansion can also replace the VF, with the HJB equation and certain regularity assumptions leading to generalized Lyapunov equations for the higher-order terms \cite{breiten2019}. Moreover, it is also possible to solve a state-dependent Riccati equation in each step, provided that the dynamics satisfies the structure $f(x) = A(x)x$, see \cite{cimen2008} for a survey. Here, many AREs have to be solved online, which is costly. In \cite{albi2022}, this is avoided by solving many AREs for a set of training states and then fitting a neural network to this data to provide efficient online feedback control. \\
This brings us to data-driven approaches, including our method. All of the above techniques are direct methods \cite{rao2009} as they are based on the DPP. Nevertheless, there is also an indirect approach that was developed independently of Bellman and goes back to L.\ S.\ Pontryagin \cite{pontryagin1962}. There, the first-order necessary condition of the OCP leads to a two-point boundary value problem called Pontryagin's maximum principle (PMP), among whose solutions the optimal solution can be found. This procedure only generates the open-loop solution for one initial state, but with the well known relation between DPP and PMP, information about the VF and its gradient is provided along this optimal trajectory. Thus, by solving many open-loop controls for different initial states, which can be done in parallel (causal-free) in an offline phase, it is possible to generate a data set that can be used to approximate the VF. An approximate feedback control can then be generated online using a surrogate.\\
In \cite{nakamura2021}, a neural network is used and the initial states for data generation are first chosen randomly. Later, the data is enriched adaptively based on the current surrogate. Without the latter, a similar approach is taken in \cite{azmi2021}, where a hyperbolic cross polynomial model is fitted to the VF. Polynomial interpolation is also used in \cite{kang2017}, where the VF data is computed on a sparse grid. \\
\newline
The current presentation is based on the idea in \cite{schmidt2018} and extends \cite{ehring2021,ehring2022}. In our approach, the domain of interest is explored by optimal trajectories starting from a problem-dependent set of initial states. This avoids having to specify a domain of possible states, which is generally very difficult. Instead, a hypercube is often used for this purpose, which is a very crude choice, since it can contain physically meaningless states if one thinks of an underlying semi-discretized PDE. Furthermore, in our data generation method, the selected initial states are chosen adaptively with the advantage that previously computed data can be used to find new, promising initial states for solving the most informative next open-loop process. This avoids redundant data and keeps it sparse.  Hermite kernel interpolation techniques are then applied to this data set to obtain a surrogate that exploits all the available information of the VF. Here an adapted Vectorial Kernel Orthogonal Greedy Algorithm (VKOGA) \cite{WH13} is used to select the interpolation points.  Generally, kernel techniques are robust to the curse of dimensionality. Moreover, they have the advantage of being grid-free and therefore are a perfect choice for scattered data. One difficulty that arises when considering Hermite kernel interpolation is that the system matrix becomes very large, making it impossible to store or work with. We overcome this with a matrix-free strategy using a special class of kernels. Furthermore, under the (strong) assumption that the VF belongs to the reproducing kernel Hilbert space (RKHS) for the considered kernel, we show the convergence of the surrogate to the VF as well as the convergence of the surrogate-controlled solution to the optimal solution. \\
 \newline
 The paper is organized as follows. Section 2 provides a brief background on optimal control. Then, in Section 3, the Hermite kernel interpolation is introduced. Additionally, the matrix-free strategy and the construction of the surrogate that satisfies certain properties of the VF is explained. In Section 4, the convergence proofs are given. We perform some numerical test in Section 5. Finally, conclusions and an outlook are presented in Section 6. 

\section{Background on optimal control}\label{sec2}
\subsection*{Optimal closed-loop control}
\noindent The core of optimal feedback control is  the value function, mentioned in the introduction and explicitly given by
\begin{gather*}
v(x) = \min_{\mathbf{u} \in \mathcal{U}_{\infty}} \int_0^{\infty} r(\mathbf{x}(s)) + \mathbf{u}(s)^{\top}R \mathbf{u}(s) \,\text{d}s  \\ 
\text{ subject to } \dot{\mathbf{x}}(s) = f(\mathbf{x}(s))+g(\mathbf{x}(s))\mathbf{u}(s)  \text{ and }\mathbf{x}(0) = x. 
\end{gather*}
Since the initial state is a free variable of the VF, it is denoted by $ x $. Furthermore, in the following we explicitly indicate the dependence of the trajectory on the initial state $x_0$ by $\mathbf{x}(\cdot;x_0)$, if this is not obvious from the context. The VF satisfies Bellman's optimality principle,  which states that any subtrajectory to the target state of an optimal trajectory is also optimal. This is reflected by
\begin{align}\label{eq:BPO}
v(x) = \min_{\mathbf{u} \in \mathcal{U}_{\infty}} \left\lbrace \int_0^{t} r(\mathbf{x}(s;x)) + \mathbf{u}(s;x)^{\top}R \mathbf{u}(s;x) \,\text{d}s + v(\mathbf{x}(t;x)) \right\rbrace .
\end{align}
Next, a precise definition of the domain of interest is given
\begin{align*}
\mathcal{T}(\mathcal{A}) :=  \closed \{&\mathbf{x}^*(t;x_0) \in \mathbb{R}^N \,\vert\, (\mathbf{x}^*(t;x_0),\mathbf{u}^*(t;x_0))  \\ &\text{solves \eqref{eq:MP}--\eqref{eq:ODE2}} \,\, \forall x_0 \in \mathcal{A} \text{ and } \forall t \in [0,\infty) \},
\end{align*}
 which is the closed set of the states of all optimal trajectories with the initial state in the compact set $\mathcal{A}$. Here $\closed$ denotes the closure. The main reasons why we can mitigate the curse of dimensionality with our data-based approach is that we are only interested in feedback control on $\mathcal{T}(\mathcal{A})$. The latter is possibly of a much lower dimension, since it contains only states that can occur under optimal control. The main assumption of the present study is that the OCP has a VF that fulfills the following assumptions:
\begin{assumption}\label{ass:VFC2}
The VF $v$ satisfies $v \in C^1(\mathcal{T}\left(\mathcal{A})\right)$ and is radially unbounded, i.e. 
\begin{align*}
v(x) \longrightarrow \infty \, \text{ for all } \, \norm{x}_2 \longrightarrow \infty.
\end{align*}
\end{assumption}
\noindent A comprehensive discussion and conditions for regularity of the VF can be found in \cite{B08,Benveniste1979}.
Assumption \ref{ass:VFC2} makes the set $\mathcal{T}(\mathcal{A}) $ compact:
\begin{lemma}\label{lemma:compact}
Let $\mathcal{A}\subset \mathbb{R}^N$ be a non-empty compact set and the VF $v$ satisfy Assumption \ref{ass:VFC2}, then the set $\mathcal{T}(\mathcal{A})$ is bounded.
\end{lemma}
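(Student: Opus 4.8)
The plan is to exploit the monotonicity of the value function along optimal trajectories together with its radial unboundedness, so as to trap every reachable optimal state inside a fixed ball. First I would show that $v$ is non-increasing along any optimal process. Fixing $x_0 \in \mathcal{A}$ and applying Bellman's optimality principle \eqref{eq:BPO} to the optimal process $(\mathbf{x}^*,\mathbf{u}^*)$, the control attaining the minimum is precisely $\mathbf{u}^*$, so that for every $t \ge 0$
\begin{align*}
v(x_0) = \int_0^{t} r(\mathbf{x}^*(s;x_0)) + \mathbf{u}^*(s;x_0)^{\top} R\, \mathbf{u}^*(s;x_0) \,\text{d}s + v(\mathbf{x}^*(t;x_0)).
\end{align*}
Because $r$ takes values in $\mathbb{R}_+$ and $R$ is positive definite, the integrand is non-negative, whence $v(\mathbf{x}^*(t;x_0)) \le v(x_0)$.

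Second, I would produce a single bound valid simultaneously for all initial states and all times. Taking $t = 0$ in the definition of $\mathcal{T}(\mathcal{A})$ shows $\mathcal{A} \subseteq \mathcal{T}(\mathcal{A})$, so by Assumption \ref{ass:VFC2} the function $v$ is continuous on the compact set $\mathcal{A}$ and therefore attains a finite maximum $M := \max_{x_0 \in \mathcal{A}} v(x_0)$. Combining this with the monotonicity established above, every state $\mathbf{x}^*(t;x_0)$ of an optimal trajectory emanating from $\mathcal{A}$ satisfies $v(\mathbf{x}^*(t;x_0)) \le v(x_0) \le M$.

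Third, I would convert this sublevel bound into a geometric one via radial unboundedness. Since $v(x) \to \infty$ as $\norm{x}_2 \to \infty$, there exists a radius $\rho > 0$ such that $v(x) > M$ whenever $\norm{x}_2 > \rho$; contrapositively, $v(x) \le M$ forces $\norm{x}_2 \le \rho$. Hence all optimal states lie in the set $\{x \in \mathbb{R}^N : \norm{x}_2 \le \rho\}$. As this set is closed, it also contains the closure of the optimal states, so $\mathcal{T}(\mathcal{A}) \subseteq \{x \in \mathbb{R}^N : \norm{x}_2 \le \rho\}$ is bounded (and, being a closure, it is closed, hence in fact compact, matching the surrounding discussion).

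The step I expect to require the most care is the monotonicity argument: one must justify that a tail of an optimal trajectory is itself optimal, so that \emph{equality} — not merely the inequality "$\le$" — holds in \eqref{eq:BPO} for the optimal process, and one must ensure that the radial-unboundedness hypothesis is invoked only at points of $\mathcal{T}(\mathcal{A})$, where $v$ is actually defined and regular by Assumption \ref{ass:VFC2}.
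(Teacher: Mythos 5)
Your proposal is correct and follows essentially the same route as the paper: Bellman's principle gives $v(\mathbf{x}^*(t;x_0))\leq v(x_0)\leq M:=\max_{\mathcal{A}}v$, and radial unboundedness converts this sublevel bound into a bound on $\norm{\mathbf{x}^*(t;x_0)}_2$. The only difference is presentational — the paper argues by contradiction along a sequence escaping to infinity, while you argue directly by extracting a radius $\rho$ — and your closing remarks correctly flag the two points (tail optimality, and invoking radial unboundedness only where $v$ is defined) that the paper glosses over.
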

\begin{proof}
We prove this by contradiction. Assume $\mathcal{T}(\mathcal{A})$ is unbounded, and so there exists a sequence  $\left(t_k,x_{0,k}\right)_{k \in \mathbb{N}} \subset [0,\infty) \times \mathcal{A}$ such that 
\begin{align*}
\norm{\mathbf{x}^*(t_k;x_{0,k})}_{2} \longrightarrow \infty.
\end{align*}
 Since the set $ \mathcal{A} $ is compact, the continuous VF assumes its maximum on it, i.e. there is a $M\in\mathbb{R}_+$ with 
\begin{align*}
M \geq v(x) \,\text{ for all } x\in \mathcal{A}.
\end{align*}
Then Bellman's principle of optimality leads to
\begin{align*}
M \geq v(\mathbf{x}^*(0;x_{0,k})) &=  \int_0^{t_k} r(\mathbf{x}^*(s;x_{0,k}),\mathbf{u}^*(s;x_{0,k})) \,\text{d}s + v(\mathbf{x}^*(t_k;x_{0,k})) \\ &\geq v(\mathbf{x}^*(t_k;x_{0,k})),
\end{align*}
which contradicts the assumption that $v(\mathbf{x}^*(t_k;x_{0,k})) \longrightarrow \infty$ for $k \longrightarrow \infty $, as $\norm{\mathbf{x}^*(t_k;x_{0,k})}_{2} \longrightarrow \infty$ for $k \longrightarrow \infty $.
\end{proof}
\noindent Note that the statement is already implied by  $v \in C(\mathcal{A})$. However, for the sake of clarity, we have combined further assumptions needed below into Assumption \ref{ass:VFC2}. A characterization of the VF as a solution of a PDE can be obtained via  \eqref{eq:BPO} by rearranging, dividing by $t$ and letting $t$ go to 0. It follows
\begin{align}
 \min_{u \in \mathbb{R}^M}\left\lbrace \left(f(x)+g(x)u\right)\cdot \nabla_x v(x) + r(x) + u^{\top}R u \right\rbrace = 0 \label{eq:HBJ}
\end{align}
for at least all $x \in \mathcal{T}(\mathcal{A})$, which is the famous HJB equation. Due to the structure of the considered problem, the local minimization in \eqref{eq:HBJ} can be solved exactly. This yields
\begin{align*}
\nabla v(x)^{\top} f(x) - \frac{1}{4} \nabla v(x)^{\top} g(x) R^{-1} g(x)^{\top}\nabla v(x)+r(x) = 0,
\end{align*}
where the minimizer is
\begin{align} 
\mathcal{K}(x;\nabla v) := u(x) = -\frac{1}{2}R^{-1}g(x)^{\top} \nabla v(x). \label{eq:feedback}
\end{align}
The optimal feedback rule is also given by the latter equation. So if  $\nabla v$ is known,  it can be used to obtain the optimal feedback policy. However, solving the HJB is not possible in higher dimensions as already mentioned. Therefore, in our approach, a data-based surrogate for the VF is constructed using the available data from optimal open-loop controlled trajectories.
\subsection*{Optimal open-loop control}
As mentioned in the introduction, there are two conceptually different ways of solving an open-loop control problem. The first is the class of direct methods, where the problem is transformed into a non-linear programming problem by discretization. This can be seen as "first discretize, then optimize".   However, this does not provide any additional information about the system, in particular about the gradient of the VF. This requires indirect methods using PMP-like conditions, which can be understood as "first optimize, then discretize". Since we are interested in this additional information for the Hermite interpolation, we present the indirect ansatz below. For what follows, we need another mild assumption. This is a summary and adaptation of Assumptions (A0) and (A1) in \cite{aseev2019}.
\begin{assumption}\label{ass:PMP}
Let $r,f,g\in C^1(\mathbb{R}^N)$  and let further for all $x_0 \in \mathcal{A}$ exist a continuous function $\varphi_1 : [0,1) \longrightarrow (0,1)$ and a locally integrable function $\varphi_2 : [0,1) \longrightarrow  \mathbb{R}$, such that for almost every $t \geq 0$ it holds
\begin{align*}
\max_{ \bar{x} \, : \, \norm{\bar{x}-\mathbf{x}^*(t)}_2 \leq \varphi_1(t) } \left\{ \sum_{i=1}^N\norm{ \nabla_x \left( f(\bar{x})+g(\bar{x})\mathbf{u}^*(t) \right)_i }_2 + \norm{ \nabla_x r(\bar{x})}_2 \right\} \leq \varphi_2(t)
\end{align*}
with  $(\mathbf{x}^*,\mathbf{u}^*)$ being the optimal process corresponding to $x_0$.
\end{assumption}
\noindent This makes it possible to specify the necessary conditions that an optimal process must fulfill:
\begin{theorem}
Let Assumption \ref{ass:PMP} be satisfied and let further $(\mathbf{x}^*,\mathbf{u}^*)$ be an optimal process, then there is a $q_0 \geq 0$ and locally absolutely continuous co-state $\mathbf{p}^* :[0,\infty) \longrightarrow \mathbb{R}^N$ such that
\begin{align}
\dot{\mathbf{x}}^*(s) &= \nabla_p H\left(\mathbf{x}^*(s),q_0 ,\mathbf{p}^*(s),\mathbf{u}^*(s) \right), \label{eq:PMP1} \;\;\;\;\;\;\;\;\,
\mathbf{x}^*(0) = x,   \\ 
\dot{\mathbf{p}}^*(s) &= -\nabla_x H\left(\mathbf{x}^*(s),q_0 ,\mathbf{p}^*(s),\mathbf{u}^*(s) \right),\label{eq:PMP2} \; \;\;\;\;
 \\
\mathbf{u}^*(s) &=\argmin_{u \in \mathbb{R}^M}\left\lbrace H\left(\mathbf{x}^*(s),q_0 ,\mathbf{p}^*(s),u \right) \right\rbrace \label{eq:PMP3} & & 
\end{align}
are valid for $s \in [0,\infty)$, where $H(x,q,p,u) = (f(x)+g(x)u)\,\cdot\,p +  q (r(x) + u^{\top}R u)$ is the control Hamiltonian.
\end{theorem}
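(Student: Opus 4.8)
This is the infinite-horizon Pontryagin Maximum Principle for \eqref{eq:MP}--\eqref{eq:ODE2}, and the most economical route is to reduce it to the maximum principle of \cite{aseev2019}. The plan is first to verify that Assumption \ref{ass:PMP} supplies exactly the hypotheses demanded there: the requirement $r,f,g \in C^1(\mathbb{R}^N)$ gives differentiability of the data, while the tube condition---boundedness of $\sum_i \norm{\nabla_x(f(\bar{x})+g(\bar{x})\mathbf{u}^*(t))_i}_2 + \norm{\nabla_x r(\bar{x})}_2$ on a neighborhood of $\mathbf{x}^*(t)$ of radius $\varphi_1(t)$ by a locally integrable majorant $\varphi_2(t)$---is precisely the dominating-growth assumption that controls the Jacobian of the vector field and the gradient of the running cost along the optimal trajectory. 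Once these are matched, the existence of a multiplier $q_0 \geq 0$ and of a locally absolutely continuous co-state $\mathbf{p}^*$ satisfying \eqref{eq:PMP1}--\eqref{eq:PMP3} follows directly.

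If one wants to argue from first principles, I would proceed by finite-horizon truncation. For each $T > 0$ I would apply the classical (finite-horizon) maximum principle to the problem of minimizing $\int_0^T r(\mathbf{x}(s)) + \mathbf{u}(s)^{\top} R \mathbf{u}(s)\,\text{d}s$ along the restriction of the optimal process; since the data are $C^1$, this yields a co-state $\mathbf{p}^T$ solving the adjoint equation $\dot{\mathbf{p}}^T = -\nabla_x H$ with a terminal transversality condition at $T$, together with the pointwise Hamiltonian minimization \eqref{eq:PMP3}. The state equation \eqref{eq:PMP1} requires nothing, since $\nabla_p H(x,q,p,u) = f(x)+g(x)u$ merely reproduces the dynamics \eqref{eq:ODE2}.

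The core of the argument is the passage $T \to \infty$, and this is where Assumption \ref{ass:PMP} does its work. The majorant $\varphi_2$ bounds the Jacobian appearing on the right-hand side of the adjoint equation inside the tube around $\mathbf{x}^*$, so a Gronwall estimate produces bounds on the family $\{\mathbf{p}^T\}$ that are uniform on compact time intervals. An Arzel\`a--Ascoli argument with a diagonal extraction then yields a subsequence converging locally uniformly to a limit $\mathbf{p}^*$, and I would pass to the limit in the integrated adjoint equation and in the minimization condition, the latter surviving by continuity of $H$ in $(p,u)$. Normalizing the pair $(q_0,\mathbf{p}^T)$ before extracting keeps the limiting multiplier pair nontrivial and preserves $q_0 \geq 0$. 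Here the compactness of $\mathcal{T}(\mathcal{A})$ from Lemma \ref{lemma:compact}, guaranteed by the radial unboundedness in Assumption \ref{ass:VFC2}, ensures that the optimal trajectories remain in a fixed compact set, so that all tube estimates can be carried out uniformly.

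I expect the principal difficulty to lie exactly in this limit: ruling out degeneration of $(q_0,\mathbf{p}^*)$ to zero and reconciling the transversality behaviour at infinity with the adjoint equation \eqref{eq:PMP2}. It is for this reason that $\varphi_2$ is assumed locally integrable rather than merely locally bounded---this integrability is what makes the adjoint uniformly controllable along the trajectory and the limit co-state well defined. The abnormal case $q_0 = 0$ cannot in general be excluded for infinite-horizon problems without further controllability or normality hypotheses, which is why the statement carries $q_0 \geq 0$ rather than the normalized $q_0 = 1$.
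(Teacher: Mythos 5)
Your primary route --- verifying that Assumption \ref{ass:PMP} matches the hypotheses of \cite{aseev2019} and invoking the maximum principle proved there --- is exactly what the paper does: it gives no independent proof and simply refers to that reference, having already noted that Assumption \ref{ass:PMP} is a summary and adaptation of Assumptions (A0) and (A1) therein. Your supplementary finite-horizon-truncation sketch is a reasonable outline of what happens inside that reference, though be aware that the compactness of $\mathcal{T}(\mathcal{A})$ you invoke comes from Lemma \ref{lemma:compact} and hence from Assumption \ref{ass:VFC2}, which is not among the hypotheses of this theorem; the local integrability of $\varphi_2$ alone is what the cited argument actually relies on.
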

\noindent The proof for this theorem can be found in \cite{aseev2019}. These equations are the core conditions of the PMP, since they determine the dynamics of the system. We extend them by an additional transversality condition  
\begin{align}\label{eq:transInfinit}
\lim_{t \longrightarrow \infty} \mathbf{p}^*(t) = 0.
\end{align}
to further restrict the solution space. It is motivated by the finite horizon OCP, where the PMP equations are given by the upper core condition \eqref{eq:PMP1}--\eqref{eq:PMP3} with the additional transversality condition $\mathbf{p}^*_T(T) = 0$. Here,  $T>0$ is the finite time horizon. However, under the current assumption, \eqref{eq:transInfinit} need not be satisfied as $T$ goes to infinity, as the famous Halkin counterexample shows. Therefore we pose another assumption and refer the reader to \cite{aseev2019,michel1982} where additional conditions implying \eqref{eq:transInfinit} are given.
\begin{assumption}\label{ass:trans}
Let the co-state satisfy \eqref{eq:transInfinit} and let $q_0=1$.
\end{assumption}
\noindent In the latter we also include the assumption that we do not consider abnormal cases, i.e. $q_0=0$. In all other cases, we have, without loss of generality, that $q_0=1$ (see \cite{aseev2019} for instance). Overall, under Assumptions \ref{ass:PMP} and \ref{ass:trans}, the PMP conditions for the infinite horizon OCP are given by
\begin{align}
\dot{\mathbf{x}}^*(s) &= \nabla_p H\left(\mathbf{x}^*(s),\mathbf{p}^*(s),\mathbf{u}^*(s) \right), \;\;\;\;\;\;\;\;\,
\mathbf{x}^*(0) = x,  \label{eq:PMP-1} \\ 
\dot{\mathbf{p}}^*(s) &= -\nabla_x H\left(\mathbf{x}^*(s),\mathbf{p}^*(s),\mathbf{u}^*(s) \right), \; \;\;\;\;
\lim_{t \longrightarrow \infty} \mathbf{p}^*(t) =: \mathbf{p}^*(\infty) = 0, \label{eq:PMP-2} \\
\mathbf{u}^*(s) &=\argmin_{u \in \mathbb{R}^M}\left\lbrace H\left(\mathbf{x}^*(s),\mathbf{p}^*(s),u \right) \right\rbrace & & \label{eq:PMP-3}
\end{align} 
for $s \in [0,\infty)$ and with the abbreviation $H(x,p,u) := H(x,q=1,p,u) =  (f(x)+g(x)u)\,\cdot\,p +  r(x) + u^{\top}R u$. With respect to our data-based Hermite interpolation approach, the next well-known relationship (see \cite{B08,aseev2019,michel1982}) between the co-state and the gradient of the VF is very important:
\begin{align} \label{eq:coStateDVF}
\mathbf{p}^*(s) =  \nabla_x v(\mathbf{x}^*(s)) 
\end{align}
It is valid with Assumption  \ref{ass:VFC2}. One can also determine the values of the VF along the optimal trajectories by an additional function $\mathbf{v}^*:[0,\infty) \longrightarrow \mathbb{R}_+$ which satisfies
\begin{align}\label{eq:PMP-4}
\dot{\mathbf{v}}^*(s) = - r(\mathbf{x}^*(s)) - \mathbf{u}^*(s)^{\top}R \mathbf{u}^*(s) \; \text{ with } \; \lim_{t \longrightarrow \infty} \mathbf{v}^*(t) =: \mathbf{v}^*(\infty) = 0. 
\end{align}
Note that $ \mathbf{v}^*(t) = v(\mathbf{x}^*(t))$. The conditions in \eqref{eq:PMP-1}--\eqref{eq:PMP-3} and  \eqref{eq:PMP-4} can be formulated as an inhomogeneous Dirichlet two-point boundary value problem, among whose solutions the optimal one can be found. The vector 
\begin{align*}
 \mathbf{z}^*(s) := \left[\begin{array}{ccc} \mathbf{x}^*(s)^{\top} & \mathbf{p}^*(s)^{\top} & \mathbf{v}^*(s)  \end{array} \right]^{\top}
\end{align*}
is introduced for this purpose and  the right-hand side  
\begin{align*}
F(\mathbf{z}^*) :=  \left[\begin{array}{c} \nabla_p H\left(\mathbf{x}^*,\mathbf{p}^*,\mathbf{u}^*(\mathbf{x}^*,\mathbf{p}^*) \right)\\[2pt]  -\nabla_x H\left(\mathbf{x}^*,\mathbf{p}^*,\mathbf{u}^*(\mathbf{x}^*,\mathbf{p}^*) \right)\\[2pt]
- r(\mathbf{x}^*) - \left( \mathbf{u}^*(\mathbf{x}^*,\mathbf{p}^*) \right)^{\top}R \; \left(\mathbf{u}^*(\mathbf{x}^*,\mathbf{p}^*) \right) \end{array} \right]
\end{align*}
with $\mathbf{u}^*(\mathbf{x}^*(s),\mathbf{p}^*(s)):= \argmin_{u \in \mathbb{R}^M}\left\lbrace H\left(\mathbf{x}^*(s),\mathbf{p}^*(s),u \right) \right\rbrace $. The boundary conditions in \eqref{eq:PMP-1}, \eqref{eq:PMP-2} and \eqref{eq:PMP-4}  can be compactly represented by
\begin{align*}
b(\mathbf{z}^*(0),\mathbf{z}^*(\infty)) := \left[\begin{array}{cc} I_N & 0 \\ 0 & 0 \end{array}\right]\mathbf{z}^*(0)+\left[\begin{array}{cc} 0 & 0 \\ 0 & I_{N+1} \end{array}\right]\mathbf{z}^*(\infty) - \left[\begin{array}{c} x_0\\[2pt] 0 \end{array}\right] =0 .
\end{align*}
Here, $I_N \in \mathbb{R}^{N \times N}$ is the $N \times N  $ identity matrix. In total,
\begin{align}\label{eq:compatPMP1}
\dot{\mathbf{z}}^*(s) = F(\mathbf{z}^*(s)) \, \text{ with }\,
b(\mathbf{z}^*(0),\mathbf{z}^*(\infty)) = 0,
\end{align}
needs to be solved, where the infinity part of the boundary condition has to be understood in the limit sense. For example, with the basic idea of \cite{fahroo2008}, such problems with an infinite time horizon can be treated numerically. That article also presents  references for theory about this type of algorithm, which we omit here. In general, the technique is to map the interval $[0,1)$ to the interval $[0,\infty)$ using a function such as $\Phi(\tau) = \frac{\tau}{1-\tau}$. The transformed system $\bar{\mathbf{z}}^*(\tau):=\mathbf{z}(\Phi(\tau))$ on the finite interval $[0,1)$ with the new time variable $\tau$ is then
\begin{align}\label{eq:compatPMP2}
\dot{\bar{\mathbf{z}}}^*(\tau) = \Phi'(\tau) F(\bar{\mathbf{z}}^*(\tau)) \, \text{ with }\,
b(\bar{\mathbf{z}}^*(0),\bar{\mathbf{z}}^*(1)) = 0.
\end{align}
However, even for this finite time system, solvers for inhomogeneous Dirichlet two-point boundary value problems such as \emph{bvp5c} from Matlab or \emph{solve{\_}bvp} from SciPy cannot be used. These evaluate the right-hand side of \eqref{eq:compatPMP2} at the singularity $\tau = 1$.  Similar to \cite{garg2011}, this can be avoided by using one step of an explicit Euler method. An approximation to the solution  $\bar{\mathbf{z}}^*(1)$ on the right boundary  can be generated as follows:
\begin{align}
\bar{\mathbf{z}}^*(1) \approx \bar{\mathbf{z}}^*(1-\Delta\tau) + \Delta\tau \Phi'(1-\Delta\tau) F(\bar{\mathbf{z}}^*(1-\Delta\tau)).
\end{align}
The system which is now on the finite time interval $[0,1-\Delta\tau]$ without singularity problems and approximates \eqref{eq:compatPMP1} is
\begin{align}
&\dot{\bar{\mathbf{z}}}^*(\tau) = \Phi'(\tau) F(\bar{\mathbf{z}}^*(\tau)) \label{eq:compatPMP21}\\ \text{ with }\,
&b(\bar{\mathbf{z}}^*(0), \bar{\mathbf{z}}^*(1-\Delta\tau) + \Delta\tau \Phi'(1-\Delta\tau) F(\bar{\mathbf{z}}^*(1-\Delta\tau))) = 0.\label{eq:compatPMP22}
\end{align}
Overall solving \eqref{eq:compatPMP21}--\eqref{eq:compatPMP22} allows information to be gathered about both the VF and its gradient along optimal trajectories. At this point, however, it is not yet clear which optimal trajectories are chosen. This is realized using the greedy exploration algorithm that we introduced in \cite{ehring2022} which builds a data  set $\mathcal{D} \subset \mathcal{T}(\mathcal{A})$ that approximates $\mathcal{T}(\mathcal{A})$. Subsequently, on $\mathcal{D}$ a feedback rule is  generated. The idea of this algorithm is to compute the optimal open-loop trajectories only for particularly promising initial states of $\mathcal{A}$ in order to expand the entire data set. It is given by the Algorithm \ref{algo:greedy1}. 
\begin{algorithm}
\caption{Greedy exploration algorithm}\label{algo:greedy1}
\begin{algorithmic}[1]
\State Input:  $\mathcal{A}$, $\mathcal{X}:=\emptyset$, $\mathcal{D}= \{0\}$ and  $\epsilon_{\text{tol,d}}$
\State $\epsilon := \epsilon_{\text{tol,d}}+1$
\State while {$\epsilon>\epsilon_{\text{tol,d}}$}
       \State \qquad $x_a  := \argmax_{x\in \mathcal{A}}  \min_{y \in \mathcal{D} } \norm{x - y}_{2}$ \label{algoStep:greedy13}
       \State \qquad $\epsilon :=   \min_{y \in \mathcal{D}} \norm{x_a - y}_{2} $ \label{algoStep:greedy14}
       \State \qquad $[\mathbf{x}(\, \cdot \,;x_a),\,v(\mathbf{x}(\, \cdot \,;x_a)),\,\nabla v(\mathbf{x}(\, \cdot \,;x_a))] := \text{solveOpenLoop}(x_a)$ \label{algoStep:greedy15}
       \State  \qquad $\mathcal{D} := \mathcal{D} \cup \{\mathbf{x}(\,\cdot\,;x_a) \}$     \label{algoStep:greedy16}
\end{algorithmic}
\end{algorithm}
The rule in step \ref{algoStep:greedy13} is used to find new starting positions $x_a$. The solveOpenLoop function in step \ref{algoStep:greedy15} outputs the optimal trajectory and the corresponding value of the VF and its gradient for the initial state $x_a$. The data set $\mathcal{D}$ is then expanded by that trajectory in step \ref{algoStep:greedy16}.  This process is performed until the cover distance in step \ref{algoStep:greedy14} is smaller than a given tolerance $\epsilon_{\text{tol,d}}$. Here, the quantity $\epsilon$ behaves like $n^{-\frac{1}{N}}$ in the worst case, where $n$ is the number of iterations. This is due to the fact that the fill distance is an upper bound of the cover distance \cite{ehring2022,MSW05}. The outputs of this algorithm are the sets $\mathcal{D}, v(\mathcal{D})$ and $\nabla v(\mathcal{D})$. 
\section{On Hermite kernel surrogates}
We recall some basic information about kernels. A symmetric function $k\,:\, \Omega \times \Omega \longrightarrow \mathbb{R}$ for a non-empty set $\Omega$ is called a kernel. If for each finite pairwise distinct set $X_n :=\{x_1,...,x_n\} \subset \Omega$, the Gram matrix $ (\mathcal{K}_{X_n})_{i,j} = k(x_i,x_j)$ is positive semidefinite, then the kernel is called positive definite (p.d.) and if all such Gram matrices are positive definite, then the kernel is called strictly positive definite (s.p.d.). Obviously, all s.p.d kernels are also p.d kernels, the latter being of particular interest since each of them can be associated with a unique reproducing kernel Hilbert space $\mathcal{H}_k(\Omega)$ (RKHS), where $ k $ is the corresponding reproducing kernel. A RKHS is a Hilbert  space of functions $f:\Omega \longrightarrow \mathbb{R}$ with the property that there exists a kernel $k\,:\, \Omega \times \Omega \longrightarrow \mathbb{R}$, such that $k(x,\,\cdot\,) \in \mathcal{H}_k(\Omega)$ for all $x \in \Omega$ and $\langle f , k(x,\,\cdot\,) \rangle_{\mathcal{H}_k(\Omega)} = f(x)$ for all $f \in \mathcal{H}_k(\Omega)$. The latter is called reproducing property. An introduction can be found in \cite[Chapter 10]{wendland2004}.

\subsection*{Hermite kernel interpolation}
We continue with the introduction of generalized kernel interpolation as discussed in \cite[Chapter 16]{wendland2004}, which also covers the Hermite kernel interpolation. For a p.d. kernel, the interpolation conditions for an unknown interpolant $s_f^n \in \mathcal{H}_k(\Omega)$ in a generalized setting are given by
\begin{align}\label{eq:genInterpol}
\lambda_i (s_f^n) = f_i,
\end{align}
where $\lambda_1,...,\lambda_n \in \mathcal{H}_k(\Omega)'$ are linear functionals contained in the dual space of the RKHS and $f_1,...,f_n \in  \mathbb{R}$ are some target values. From Theorem 16.1 in \cite{wendland2004},  it follows that \eqref{eq:genInterpol} can be solved uniquely if the $\lambda_1,...,\lambda_n \in \mathcal{H}_k(\Omega)'$ are linearly independent. Furthermore the interpolant is given by
\begin{align*}
s_f^n = \sum_{i=1}^n \alpha_i v_i,
\end{align*}
where $v_i \in \mathcal{H}_k(\Omega) $ is the Riesz representer of $\lambda_i$ and the coefficients $\alpha_1,...,\alpha_n \in \mathbb{R}$ are determined via the interpolation condition in \eqref{eq:genInterpol}. The considered Hermite interpolation problem is of the following type: It is assumed that for a set $\mathcal{D} \subset \Omega$, the values of the VF $v(\mathcal{D})$ and of its gradient $\nabla v(\mathcal{D})$ are given. For a p.d. kernel $k \in C^2(\Omega \times \Omega )$ and a finite pairwise distinct set $X_n = \{x_1,...,x_n \} \subset \mathcal{D}$, the goal is to find a surrogate such that
\begin{align}\label{eq:interDeriv}
\partial^{a} s_{v}^n(x_i) = \delta_{x_i} \partial^{a} s_{v}^n = \delta_{x_i} \partial^{a} v  = \partial^{a} v(x_i)
\end{align}
for all multiindices $ a \in \mathbb{N}^N_0 \text{ with } \sum_{l=1}^N (a)_l \leq 1$ and $i=1,...,n$. Note that both the point evaluation functional $\delta_{x_i}$ and the combination with the partial derivative $\delta_{x_i} \partial^{a}$ are in the dual space $\mathcal{H}_k(\Omega)'$ due to the reproducing property. The Riesz representers of these functionals are given by  $ \partial^{a}_1 k(x_i,\cdot) $ since $\delta_{x_i} \partial^{a} g = \langle \partial^{a}_1 k(x_i,\cdot) ,g \rangle_{\mathcal{H}_k(\Omega)}$ for all $g \in \mathcal{H}_k(\Omega)$ (see Theorem 10.45 in \cite{wendland2004}). Here the $1$ in the subscript of $ \partial^{a}_1$ denotes that it acts on the first input variable of the kernel.  Overall, the surrogate can  be represented by
\begin{align}\label{eq:surrogateForm}
s_{v}^n(x) =  \sum_{i=1}^n \alpha_i k(x_i,x) + \langle \beta_i , \nabla_1 k(x_i,x) \rangle .
\end{align}
The condition from \eqref{eq:interDeriv} for determining the coefficients $\{\alpha_i\}_{i=1}^n \subset \mathbb{R}$ and $\{\beta_i\}_{i=1}^n \subset \mathbb{R}^N$ can  be written compactly with matrices as follows 
\begin{align*}
\begin{bmatrix}
\mathcal{K}_{X_n} & \mathcal{B}_{X_n} 
\end{bmatrix}
\begin{bmatrix}
\underline{\alpha} \\ \underline{\beta} 
\end{bmatrix} = \begin{bmatrix}
v(x_1) & \hdots & v(x_n)
\end{bmatrix}^{\top} =: \underline{v},
\end{align*}
where
\begin{gather*}
\mathcal{K}_{X_n} := \begin{bmatrix}
k(x_1,x_1) & \hdots & k(x_n,x_1) \\ 
\vdots & \ddots & \vdots  \\ 
k(x_1,x_n) & \hdots & k(x_n,x_n) 
\end{bmatrix} \in \mathbb{R}^{n \times n},\;
\mathcal{B}_{X_n} := \begin{bmatrix}
\nabla_1 k(x_1,x_1)^{\top}  & \hdots & \nabla_1 k(x_n,x_1)^{\top}  \\ 
\vdots & \ddots & \vdots  \\ 
\nabla_1 k(x_1,x_n)^{\top}  & \hdots & \nabla_1 k(x_n,x_n)^{\top}  
\end{bmatrix}\in \mathbb{R}^{n \times nN}, \\
\underline{\alpha} := \begin{bmatrix}
\alpha_1 & 
\hdots   &
\alpha_n 
\end{bmatrix}^{\top}\in \mathbb{R}^{n} \text{ and } \underline{\beta} := \begin{bmatrix}
\beta_1^{\top} & 
\hdots   &
\beta_n^{\top} 
\end{bmatrix}^{\top} \in \mathbb{R}^{nN}
\end{gather*}
for the part of the conditions involving the values of the VF. For the derivative part, we first take a look at the gradient of the surrogate, which is 
\begin{align*}
\nabla s_{v}^n(x) =  \sum_{i=1}^n \alpha_i \nabla_2 k(x_i,x) +  \mathcal{E}_k(x_i,x) \beta_i 
\end{align*}
with 
\begin{align*}
 \mathcal{E}_k(x_i,x) := \begin{bmatrix}
\partial_{2,1} \nabla_1 k(x_i,x)^{\top} \\ 
\vdots  \\ 
\partial_{2,N} \nabla_1 k(x_i,x)^{\top}
\end{bmatrix} \in \mathbb{R}^{N \times N}.
\end{align*}
Here $ \nabla_2 $ denotes the gradient that acts on the second input variable of the kernel and $\partial_{2,i}$ the partial derivative that acts on the $ i $-th component of the second input variable. A compact matrix notation of the second part  then results in
\begin{align*}
\begin{bmatrix}
\mathcal{B}_\mathcal{X}^{\top} & \mathcal{C}_\mathcal{X} 
\end{bmatrix}
\begin{bmatrix}
\underline{\alpha} \\ \underline{\beta} 
\end{bmatrix} = \begin{bmatrix}
\nabla v(x_1)^{\top} & \hdots & \nabla v(x_n)^{\top}
\end{bmatrix}^{\top} =: \underline{\nabla v},
\end{align*}
where
\begin{align*}
\mathcal{C}_\mathcal{X} = \begin{bmatrix}
\mathcal{E}_k(x_1,x_1) & \hdots & \mathcal{E}_k(x_n,x_1) \\ 
\vdots & \ddots & \vdots  \\ 
\mathcal{E}_k(x_1,x_n) & \hdots & \mathcal{E}_k(x_n,x_n) 
\end{bmatrix}\in \mathbb{R}^{nN \times nN}.
\end{align*}
In doing so, we have taken advantage of the fact that $\nabla_2 k(x_i,x_j) = \nabla_1 k(x_j,x_i)$. Ultimately, the system of linear equations for determining the coefficients for the surrogate is
\begin{align}\label{eq:matrixM}
\underbrace{\begin{bmatrix}
\mathcal{K}_{X_n} & \mathcal{B}_{X_n} \\
\mathcal{B}^{\top}_{X_n} & \mathcal{C}_{X_n}
\end{bmatrix}}_{=:\mathcal{M}_{X_n}} \begin{bmatrix}
\underline{\alpha} \\ \underline{\beta} 
\end{bmatrix} = \begin{bmatrix}
\underline{v} \\ \underline{\nabla v}
\end{bmatrix}.
\end{align}
\noindent 
By construction, the matrix $\mathcal{M}_{X_n} \in \mathbb{R}^{N(n+1) \times N(n+1)}$ is symmetric. Furthermore, it can be shown that it is positive definite for a suitable choice of kernel and therefore the system is uniquely solvable.
\begin{proposition}\label{theo:regM}
Let $k(x,y)= \Phi(x-y)$ be a s.p.d kernel with  $\Phi\in C^{2}(\Omega)\cap L^1(\Omega)$, then the matrix $\mathcal{M}_{X_n}$  is symmetric positive definite for all pairwise distinct $X_n \subset \mathbb{R}^N$.
\end{proposition}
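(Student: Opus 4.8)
The plan is to recognise $\mathcal{M}_{X_n}$ as a Gram matrix in the RKHS and thereby reduce positive definiteness to the linear independence of the underlying interpolation functionals, which I would then settle by a Fourier-analytic argument. Symmetry is immediate from the construction (and is already noted in the text), so the whole effort goes into definiteness.

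First I would, to a coefficient vector $w := [\,\underline{\alpha}^{\top}\ \underline{\beta}^{\top}\,]^{\top}$, associate the function
\begin{align*}
s = \sum_{i=1}^n \alpha_i\, k(x_i,\cdot) + \langle \beta_i, \nabla_1 k(x_i,\cdot) \rangle \in \mathcal{H}_k(\Omega),
\end{align*}
which is exactly the surrogate form \eqref{eq:surrogateForm}. The excerpt already records that $\partial^{a}_1 k(x_i,\cdot)$ is the Riesz representer of the functional $\delta_{x_i}\partial^{a}$; hence, by the reproducing property, the entries of $\mathcal{M}_{X_n}$ are precisely the pairwise $\mathcal{H}_k(\Omega)$-inner products of the representers $\{k(x_i,\cdot)\}_i \cup \{\partial_{1,l} k(x_i,\cdot)\}_{i,l}$. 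Consequently $w^{\top}\mathcal{M}_{X_n} w = \norm{s}_{\mathcal{H}_k(\Omega)}^2 \ge 0$, so $\mathcal{M}_{X_n}$ is symmetric positive semidefinite, and it is positive definite if and only if $s \equiv 0$ forces $\underline{\alpha}=0$ and $\underline{\beta}=0$, i.e.\ if and only if the functionals are linearly independent (cf.\ Theorem 16.1 in \cite{wendland2004}).

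To prove this implication I would pass to Fourier space. Assuming $s \equiv 0$ and using that $k(x,y)=\Phi(x-y)$ with $\Phi$ even (hence $\nabla\Phi$ odd), a short computation gives the factorisation $\hat{s}(\omega) = \hat{\Phi}(\omega)\,p(\omega)$, where
\begin{align*}
p(\omega) = \sum_{i=1}^n \bigl(\alpha_i - i\,\langle \beta_i,\omega\rangle\bigr)\, e^{-i\langle \omega, x_i\rangle}
\end{align*}
is an exponential polynomial, in particular a real-analytic (entire) function of $\omega$. To avoid imposing $\nabla\Phi \in L^1$ on top of $\Phi\in C^2(\Omega)\cap L^1(\Omega)$, I would read this factorisation off the Fourier description of the native-space norm rather than from a classical transform of $\nabla\Phi$. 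Since $\Phi\in C\cap L^1$ is strictly positive definite, $\hat{\Phi}$ is continuous, nonnegative, and not identically zero, so by continuity $\hat{\Phi}>0$ on some nonempty open set $U$; on $U$ the identity $\hat{s}\equiv 0$ forces $p\equiv 0$.

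The conclusion then rests on two analytic facts, which I expect to be the main obstacle to state cleanly. First, a nonzero real-analytic function cannot vanish on a set of positive measure (here an open set), so $p\equiv 0$ on all of $\mathbb{R}^N$. Second, since the nodes $x_1,\dots,x_n$ are pairwise distinct, the characters $\omega\mapsto e^{-i\langle\omega,x_i\rangle}$ are linearly independent modulo polynomial coefficients; hence each coefficient polynomial $\alpha_i - i\langle\beta_i,\omega\rangle$ must vanish identically, which yields $\alpha_i=0$ (constant term) and $\beta_i=0$ (linear term) for every $i$. Thus $s\equiv 0 \Rightarrow w=0$, and $\mathcal{M}_{X_n}$ is positive definite. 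The delicate points are the correct invocation of the $L^1$-Fourier characterisation of \emph{strict} positive definiteness to guarantee $\hat{\Phi}>0$ on a genuine open set, and the exponential-polynomial independence argument; everything else is bookkeeping.
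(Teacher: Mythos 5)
Your reduction is the same as the paper's: you identify $w^{\top}\mathcal{M}_{X_n}w$ with $\norm{s}_{\mathcal{H}_k(\Omega)}^2$ for the associated function $s$, so that positive definiteness is equivalent to linear independence of the functionals $\{\delta_{x_i}\}_i\cup\{\delta_{x_i}\partial_{1,l}\}_{i,l}$. Where you diverge is in how that linear independence is settled: the paper simply cites Theorem~16.4 of Wendland, which asserts exactly this independence for translation-invariant kernels with $\Phi\in C^{2}\cap L^1$, whereas you re-derive it via the Fourier factorisation $\hat{\Phi}(\omega)\,p(\omega)$ with $p$ an exponential polynomial, positivity of $\hat{\Phi}$ on an open set, real-analyticity of $p$, and the independence of characters with polynomial coefficients. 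Your argument is correct and is essentially a self-contained proof of the cited theorem (indeed it mirrors Wendland's own proof of it), so it buys transparency at the cost of length; the paper's version buys brevity by importing the lemma. One point you should make explicit rather than wave at: to express $\norm{s}_{\mathcal{H}_k}^2$ as $c\int \hat{\Phi}(\omega)\,\vert p(\omega)\vert^2\,\mathrm{d}\omega$ with the derivative terms included, you need $\int \hat{\Phi}(\omega)\,(1+\norm{\omega}_2^2)\,\mathrm{d}\omega<\infty$, and this is precisely what the hypothesis $\Phi\in C^{2}$ supplies (via the standard moment bounds on Fourier transforms of twice continuously differentiable positive definite functions); as written, this is the only place in your argument where $C^2$ actually enters, and omitting it leaves the dominant-growth issue of $\vert p\vert^2$ unaddressed. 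This is a presentational gap, not a mathematical one.
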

\begin{proof}
First, we recall Theorem 16.4 in \cite{wendland2004}, which states that under the assumed condition the set
\begin{align*}
\{ \delta_x \partial^{a} \,\vert\, x\in \Omega \text{ and } (a_i)_{i=1}^N \in \mathbb{N}_0^N \text{ with } \sum_{i=1}^N a_i \leq m \} \subset \mathcal{H}_k(\Omega)'
\end{align*}
is linearly independent. So, since
\begin{align*}
\partial_{1,i} k(y,x) = \left\langle \partial_{1,i} k(y,\,\cdot\,) , k(x,\,\cdot\,) \right\rangle_{\mathcal{H}_k(\Omega)}
\end{align*}
and
\begin{align*}
\partial_{2,j}\partial_{1,i} k(y,x) = \left\langle \partial_{1,i} k(y,\,\cdot\,) , \partial_{1,j} k(x,\,\cdot\,) \right\rangle_{\mathcal{H}_k(\Omega)}
\end{align*}
for all $\alpha \in \mathbb{R}^{N(n+1)} \setminus \{0 \}$,  it follows that
\begin{align*}
{a}^{\top} \mathcal{M}_{X_n} a &= \norm{ \sum_{i=1}^n a_i k(x_i, \,\cdot \,)+ \sum_{i=1}^n \sum_{j=1}^N a_{(iN+j)}\, \partial_{1,j} k(x_i,\,\cdot\,)  }_{\mathcal{H}_k(\Omega)}^2  \\
&= \norm{ \sum_{i=1}^n a_i \delta_{x_i}+ \sum_{i=1}^n \sum_{j=1}^N a_{(iN+j)}\, \delta_{x_i}\partial_{1,j}   }_{\mathcal{H}_k(\Omega)'}^2 \neq 0, 
\end{align*}
which shows that the matrix is positive definite.
\end{proof}
\noindent Regarding the matrix $\mathcal{M}_{X_n}\in\mathbb{R}^{N(n+1)\times N(n+1)}$, there is a computational problem since it scales quadratically in $ n$, making it impossible to construct for a set $X_n$ with many centers and a high state dimension $N$. However, as the next section will show, this is not necessary.
\subsection*{Matrix-free approach}\label{sec:matrixFree}
An explicit representation of the symmetric positive definite matrix $\mathcal{M}_{X_n}$ is not required if an iterative scheme like the CG-method is used to find the solution of the linear system of equations. All that is required here is the implementation of a matrix-vector multiplication. By restricting the consideration to a special class of radial kernels, the matrix-vector multiplication becomes efficient. The kernels in this class are of the form $k(x,y) = \Phi( \norm{x-y}^2)$ with $\Phi\in C^{2}(\mathbb{R}_0^+,\mathbb{R})$.  To see that this choice is beneficial, we look at  $\mathcal{E}_k(x,y)$:
\begin{align*}
\mathcal{E}_k(x,y) &= -2\Phi'( \norm{x-y}_2^2) \mathbb{I}_{N \times N } + 4\Phi''( \norm{x-y}_2^2)(x-y) (y-x)^{\top}.
\end{align*}
Thus the matrix $\mathcal{E}_k(x,y)$ consists of two scalar factors, a  unit matrix $\mathbb{I}_{N \times N }\in \mathbb{R}^{N \times N}$ and the rank-one matrix $(x-y) (y-x)^{\top}$. Both the unit matrix and the latter matrix are advantageous in terms of matrix-vector multiplication, since
\begin{align*}
\mathcal{C}_{X_n} \underline{\beta}  \!= \!\begin{bmatrix}
 \sum_{i=1}^n \!-2\Phi'( \norm{x_i\!-\!x_1}_2^2) \beta_i \!+ \!4\Phi''( \norm{x_i\!-\!x_1}_2^2)\!(x_i\!-\!x_1)\! \langle x_1\!-\!x_i, \beta_i \rangle_{\mathbb{R}^N}
\\ \vdots \\ 
 \sum_{i=1}^n\! -2\Phi'( \norm{x_i\!-\!x_n}_2^2) \beta_i \!+ \!4\Phi''( \norm{x_i\!-\!x_n}_2^2)\!(x_i\!-\!x_n)\! \langle x_n\!-\!x_i, \beta_i \rangle_{\mathbb{R}^N}
\end{bmatrix},
\end{align*} 
which reduces the total number of operations from  $O(N^2 n^2)$ to $O(N n^2)$.  The matrix-free approach becomes even more relevant when it comes to an efficient evaluation of the gradient of the surrogate on the input data $\mathcal{D}$. In this case, the corresponding matrix would have $\vert\mathcal{D}\,\vert N \times n N$ entries, which would be impossible to store even for small data sets. Here $ \vert\mathcal{D}\vert$ is the size of the input data. In this case, the number of operations can also be reduced from $O(N \,\vert\mathcal{D}\vert\, n^2)$ to $O(\vert\mathcal{D}\vert\, n^2)$.   The evaluation of the surrogate on given input data is necessary, for example, when using a greedy selection criterion for the centers, as introduced in the next subsection.  
\subsection*{A greedy selection criterion for the interpolation points}
At this point we have not yet clarified how the centers that represent the interpolation points  will be selected. This is determined by an iterative scheme that successively increases the set of centers through a greedy strategy. The algorithm is a version based on the  VKOGA from \cite{WH13}, which by Algorithm \ref{algo:greedy2} has been adapted to the Hermite interpolation. 
\begin{algorithm}
\caption{Hermite VKOGA}\label{algo:greedy2}
\begin{algorithmic}[1]
\State Input: $\mathcal{D}, v(\mathcal{D})$, $\nabla v(\mathcal{D})$ , $X:=\emptyset$,  $s_{v}:=0$ and  $\epsilon_{\text{tol,f}}$
\State{while $ \max_{x\in \mathcal{D} \setminus X}\left(\vert v(x)- s_{v}(x)\vert+ \norm{\nabla v(x)- \nabla s_{v}(x)}_{2} \right)> \epsilon_{\text{tol,f}}$}
       \State \qquad $x := \argmax_{x\in \mathcal{D} \setminus X} \left(\vert v(x)- s_{v}(x)\vert +  \norm{\nabla v(x)- \nabla s_{v}(x)}_{2} \right) $ \label{algoStep:greedy22}
       \State \qquad $X := X \cup \{x\}$ \label{algoStep:greedy23}
       \State \qquad $s_{v} :=  \text{interpolant}\left(X,\nabla v(X)\right) $ \label{algoStep:greedy24}
\end{algorithmic}
\end{algorithm}
The procedure is performed until the  interpolation error is less than $ \epsilon_{\text{tol,f}}$. At each iteration step in the loop, the algorithm selects the state that has the greatest deviation between the current interpolation and the output data in the sense that both the function value and the value of the gradient are included.  The picked state is then added to the set of centers in  Step \ref{algoStep:greedy23}. Thus the resulting Hermite interpolant has no error there. The output of the algorithm is a surrogate $s_{v} \approx  v$.  In general, VKOGA is easy to implement and it often leads to very good results. Theoretical results on the observed excellent convergence rates can be found in \cite{wenzel2022}.
\subsection*{A structured surrogate for the value function}\label{sec:strucHermite}
Now, we  describe a strategy for structuring the surrogate of the VF in a way that incorporates three known properties of the VF. This makes the surrogate more context-aware. The first property is the fact that $v(0) = 0$ and $\nabla v(0) = 0$, since the zero state is the target state and therefore we have no cost there. By using a kernel $k$ which satisfies $k(x,0)=0$, $\partial_{1,i} k(x,0) = 0$ and $\partial_{2,j} \partial_{1,i} k(x,0) = 0$ for all $x \in \Omega$ and all $i,j=1,...,n$, one can ensure that the surrogate also fulfills this. Such a kernel can be generated, for example, by multiplying any kernel $k'$ by $\langle x,y \rangle^2$. If $k'$ is p.d., then so is the resulting kernel. Furthermore, we then remove the state $0$ from the data set $\mathcal{D}$. Another possibility would be to force $s_v^n(0)=0$ via an interpolation condition, but this would be incompatible with the realization of the next property. The second property to consider is that the VF can be very well represented locally near zero as a quadratic function. For example, if $\left( J_f(0),g(0)\right)$ is controllable (see \cite{sontag2013}), it can be determined by the linearized OCP. Here  $J_f$ is the Jacobian matrix of $f$. The VF of the linearized problem can be generated by solving the corresponding algebraic Riccati equation. It has the form $v_{\text{local}}(x) = x^{\top} Q x$, where $Q\in \mathbb{R}^{n \times n}$ is a positive definite matrix. This is added to the surrogate so that only a correction term is determined by the data-based strategy. The third property that the surrogate should adapt is positivity, since the costs considered must always be positive. This is achieved by squaring. Altogether, the structured surrogate is given by
\begin{align*}
s_{\text{Str},v}^n(x) = \left( \sqrt{ x^{\top} Q x} + \sum_{i=1}^n \alpha_i k(x_i,x) + \langle \beta_i , \nabla_1 k(x_i,x) \rangle \right)^2.
\end{align*}
As in the penultimate subsection, the interpolation condition for determining the coefficients $\{\alpha_i\}_{i=1}^n \subset \mathbb{R}$ and $\{\beta_i\}_{i=1}^n \subset \mathbb{R}^N$ are 
\begin{align} \label{eq:inter}
s_{\text{Str},v}^n(x_j) = v(x_j) \, \text{ and } \nabla s_{\text{Str},v}^n(x_j) = \nabla v(x_j)  \text{ for all } j =1,...,n.
\end{align}
These conditions are not linear with respect to $\{\alpha_i\}_{i=1}^n$ and $\{\beta_i\}_{i=1}^n$, but linear conditions implying them can be found.  Since the VF is always positive,  
\begin{align} \label{eq:inter1}
\sum_{i=1}^n \alpha_i k(x_i,x_j) + \langle \beta_i , \nabla_1 k(x_i,x_j) \rangle \!=\! \sqrt{v(x_j)} -  \sqrt{x_j^{\top} Q x_j}  \text{ for all } j =1,...,n
\end{align}
implies the first equation in \eqref{eq:inter}. Furthermore, this linear condition can be written compactly in matrix form as follows 
\begin{align*}
\begin{bmatrix}
\mathcal{K}_{X_n} & \mathcal{B}_{X_n} 
\end{bmatrix}
\begin{bmatrix}
\underline{\alpha} \\ \underline{\beta} 
\end{bmatrix} = \begin{bmatrix}
\sqrt{v(x_1)}- \sqrt{x_1^{\top} Q x_1} & \hdots & \sqrt{v(x_n)}- \sqrt{x_n^{\top} Q x_n}
\end{bmatrix}^{\top}.
\end{align*}
\noindent The second interpolation condition can also be expressed by an equivalent linear system of equations. To see this, we first need to look at the derivative of the structured surrogate:
\begin{align*}
\nabla s_{\text{Str},v}^n(x) = 2\sqrt{ s_{\text{Str},v}^n(x) }  \left( \frac{Q x}{\sqrt{x^{\top} Q x}} + \sum_{i=1}^n \alpha_i \nabla_2 k(x_i,x) +  \mathcal{E}_k(x_i,x) \beta_i \right)
\end{align*}
 and with the interpolation conditions in \eqref{eq:inter} we get 
\begin{align*}
\nabla  v(x_j) \!= \!\nabla \! s_{\text{Str},v}^n(x_j) \!\overset{\text{\eqref{eq:inter1}}}{=}\!2 \sqrt{\!v(x_j)\!}\left(\!\!\frac{Q x_j}{\sqrt{x_j^{\top} Q x_j}} \!+\!\! \sum_{i=1}^n \alpha_i \nabla_2 k(x_i,x_j)\! +\!  \mathcal{E}_k(x_i,x_j) \beta_i \!\! \right)
\end{align*}
for all $j=1,...,n$. The case $v(x_j)=0$ cannot appear, as this is equivalent to $x_j=0$ which we have excluded from the data set $\mathcal{D}$ as explained at the beginning of the current subsection. Therefore, with the upper equation,  the linear condition implying the second interpolation condition in \eqref{eq:inter} for all $x_j$ with $v(x_j) \neq 0$ becomes 
\begin{align*}
\frac{\nabla v(x_j)}{2 \sqrt{v(x_j)}} -\frac{Q x_j}{\sqrt{x_j^{\top} Q x_j}} =  \sum_{i=1}^n \alpha_i \nabla_2 k(x_i,x_j) +  \mathcal{E}_k(x_i,x_j) \beta_i  \, \text{ for all } j=1,...,n,
\end{align*}
which can be compactly formulated with matrices by
\begin{align*}
\begin{bmatrix}
\mathcal{B}_{X_n}^{\top} & \mathcal{C}_{X_n} 
\end{bmatrix}
\begin{bmatrix}
\underline{\alpha} \\ \underline{\beta} 
\end{bmatrix} = \begin{bmatrix}
\left(\frac{\nabla v(x_1)}{2 \sqrt{v(x_1)}} -\frac{Q x_1}{\sqrt{x_1^{\top} Q x_1}}\right)^{\top} & \hdots & \left( \frac{\nabla v(x_n)}{2 \sqrt{v(x_n)}} -\frac{Q x_n}{\sqrt{x_n^{\top} Q x_n}} \right)^{\top}
\end{bmatrix}^{\top}.
\end{align*}
Therefore, the system of linear equations to obtain the surrogate is similar to \eqref{eq:matrixM}, only the right-hand side is changed. It is remarkable  that all three properties can be included natively in the surrogate without any additional effort in the solution step, as long as the matrix $Q$ is known.
\section{Two convergence results for the Hermite kernel method}
\noindent We present two convergence results for the (unstructured) Hermite surrogate and for the surrogate controlled trajectories. Another assumption combining the conditions from Sections 2 and 3 is required at this point.
\begin{assumption}\label{ass:VfinRKHS}
For a compact set $\Omega \subset \mathbb{R}^N$, let $k \in C^4 \left(\conv (\Omega) \times \conv (\Omega) \right)$ be a  s.p.d. kernel. Further let the VF $v$ be radially unbounded and  $v \in \mathcal{H}_k(\Omega )$.
\end{assumption}
\noindent Note that this assumption implies $v \in C^2(\Omega )$ (also with Theorem 10.45 in \cite{wendland2004}), which is a higher regularity requirement than that of Section 2. However, this leads to a continuous dependence  of the optimal trajectories on the initial data, which is essential for the subsequent proof of convergence. For brevity, we will use the abbreviations $\norm{\cdot}_k := \norm{\cdot}_{\mathcal{H}_k(\Omega)}$ and $\langle \cdot, \cdot \rangle_k := \langle \cdot, \cdot \rangle_{\mathcal{H}_k(\Omega)}$  in the following.
\begin{lemma}\label{lemma:surrogateLipschitz}
Under the Assumption \ref{ass:VfinRKHS}, it holds
\begin{align}\label{eq:QauasiLipschitz}
\vert s_v^n(x) - s_v^n(y) \vert + \norm{\nabla s_v^n(x) - \nabla s_v^n(y)}_2 \leq L \norm{x-y}_2
\end{align}
for all $x,y \in \Omega $, where the constant $L$ is independent of the respective Hermite kernel surrogate for the VF.  
\end{lemma}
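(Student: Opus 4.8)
\section*{Proof plan}

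The plan is to exploit the fact that the generalized Hermite interpolant is an \emph{orthogonal projection} in the RKHS, which delivers the one inequality responsible for the surrogate-independence of $L$, and then to transfer a Lipschitz estimate from the kernel itself onto $s_v^n$ and $\nabla s_v^n$ by the reproducing property and Cauchy--Schwarz. First I would record that, by the characterization of generalized interpolation (Theorem 16.1 in \cite{wendland2004}), the conditions \eqref{eq:interDeriv} mean exactly that $s_v^n - v \perp V_n$ with $s_v^n \in V_n$, where $V_n := \Span\{k(x_i,\cdot),\,\partial_{1,j}k(x_i,\cdot)\}$. Thus $s_v^n$ is the orthogonal projection of $v$ onto $V_n$ — which is well defined precisely because Assumption \ref{ass:VfinRKHS} guarantees $v \in \mathcal{H}_k(\Omega)$ — and therefore $\norm{s_v^n}_k \le \norm{v}_k$ for every $n$. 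This single bound is what makes the final constant independent of the respective surrogate.

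Next I would use the reproducing identities already recorded in the proof of Proposition \ref{theo:regM}, namely $s_v^n(x) = \langle s_v^n, k(x,\cdot)\rangle_k$ and $\partial_j s_v^n(x) = \langle s_v^n, \partial_{1,j}k(x,\cdot)\rangle_k$, to write both increments as inner products against $s_v^n$:
\[
s_v^n(x)-s_v^n(y) = \langle s_v^n,\, k(x,\cdot)-k(y,\cdot)\rangle_k, \qquad
\partial_j s_v^n(x)-\partial_j s_v^n(y) = \langle s_v^n,\, \partial_{1,j}k(x,\cdot)-\partial_{1,j}k(y,\cdot)\rangle_k .
\]
Cauchy--Schwarz combined with $\norm{s_v^n}_k \le \norm{v}_k$ then reduces the whole statement to estimating the two Hilbert-space moduli of continuity $\norm{k(x,\cdot)-k(y,\cdot)}_k$ and $\norm{\partial_{1,j}k(x,\cdot)-\partial_{1,j}k(y,\cdot)}_k$ by a constant times $\norm{x-y}_2$, since the number of components $j$ is the fixed dimension $N$.

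To finish, I would express these norms through the kernel: $\norm{k(x,\cdot)-k(y,\cdot)}_k^2 = k(x,x)-2k(x,y)+k(y,y)$ and, writing $\Psi_j := \partial_{2,j}\partial_{1,j}k$, $\norm{\partial_{1,j}k(x,\cdot)-\partial_{1,j}k(y,\cdot)}_k^2 = \Psi_j(x,x)-2\Psi_j(x,y)+\Psi_j(y,y)$. Each right-hand side is a \emph{symmetric mixed second difference}: using the symmetry $\Psi_j(x,y)=\Psi_j(y,x)$ (inherited from the RKHS inner product), the second expression equals $\Psi_j(x,x)-\Psi_j(x,y)-\Psi_j(y,x)+\Psi_j(y,y)$, which is a pure mixed second difference and hence is $O(\norm{x-y}_2^2)$ on the compact set $\conv(\Omega)$, with constant controlled by $\sup|\partial\partial\Psi_j|$; the kernel term is treated identically. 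Taking square roots yields the two required Lipschitz bounds, and collecting constants gives the claim with $L$ proportional to $\norm{v}_k$ times suprema of kernel derivatives on $\conv(\Omega)$ — none of which depend on $n$.

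The hard part will be the gradient term, for two reasons. It involves the second mixed derivative $\Psi_j = \partial_{2,j}\partial_{1,j}k$, so obtaining a \emph{quadratic} bound in $\norm{x-y}_2$ requires $\Psi_j \in C^2$, i.e.\ $k \in C^4$ — which is exactly the regularity imposed in Assumption \ref{ass:VfinRKHS} and not available under the weaker $C^2$ hypothesis of Section 2. The delicate point is verifying that the first-order Taylor terms of $\Psi_j(x,x)-2\Psi_j(x,y)+\Psi_j(y,y)$ cancel on the diagonal, which is precisely the content of the mixed-second-difference rewriting above; without this cancellation one would obtain only Hölder-$\tfrac12$ continuity of $\nabla s_v^n$ rather than the asserted Lipschitz estimate.
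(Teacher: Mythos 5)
Your proposal is correct and follows essentially the same route as the paper: the uniform bound $\norm{s_v^n}_k \le \norm{v}_k$ from the interpolation (projection) property, then the reproducing identity plus Cauchy--Schwarz to reduce everything to the kernel differences $\norm{k(x,\cdot)-k(y,\cdot)}_k$ and $\norm{\partial_{1,j}k(x,\cdot)-\partial_{1,j}k(y,\cdot)}_k$, which are controlled quadratically in $\norm{x-y}_2$ via the mixed-second-difference cancellation on $\conv(\Omega)$ using $k\in C^2$ resp.\ $C^4$. The paper realizes this last step by two applications of the mean value theorem rather than your integral representation of the mixed difference, but the substance and the required regularity are identical.
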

\begin{proof}
First, we need to see that the RKHS norm of a surrogate is bounded by the RKHS norm of the VF independent of $n$:
\begin{align*}
\norm{v}_{k} \norm{s_v^n }_{k} &\geq \langle v, s_v^n \rangle_{k} + \sum_{i=1}^n \alpha_i \underbrace{\left(s_v^n(x_i)-v(x_i)\right)}_{=0} + \langle \beta_i , \underbrace{\nabla s_v^n (x_i) -  \nabla v(x_i)}_{=0} \rangle_2 \\
 &= \langle v, s_v^n \rangle_{k} + \langle s_v^n - v , s_v^n \rangle_{k} = \norm{s_v^n }_{k}^2
\end{align*} 
For the first inequality, we used the Cauchy-Schwarz inequality and inserted a zero, which is possible because of the interpolation conditions. Using the reproducing property and the sum representation of the Hermite interpolant, we have reformulated the zero term. So overall we see that $\norm{v}_k \geq \norm{s_v^n }_{k}$. This property is the reason why later the constant $L$ depends only on $\norm{v}_{k}$. To show \eqref{eq:QauasiLipschitz}, we consider
\begin{align*}
\vert s_v^n(x) - s_v^n(y) \vert^2    = & \langle  s_v^n, k(x ,\,\cdot\,)-k(y ,\,\cdot\,) \rangle_{k}^2 \\
  \leq & \norm{s_v^n}_k^2 \norm{k(x ,\,\cdot\,)-k(y ,\,\cdot\,) }_{k}^2 \\
  \leq & \norm{v}_{k}^2 (k(x ,x)-k(x,y) + k(y, y)-k(y,x)) \\
   = & \norm{v}_{k}^2 \left(\nabla_2 k(x,\xi_1)(x-y) + \nabla_2 k(y, \xi_2)(y-x)\right) \\
   \leq & \norm{v}_{k}^2\norm{x-y}_2\norm{\nabla_2 k(x,\xi_1)- \nabla_2k(y, \xi_2)}_2 \\
  = & \norm{v}_{k}^2\norm{x\!-\!y}_2\norm{\nabla_2 k(x,\xi_1)\!-\! \nabla_2 k(x,\xi_2) \!+\! \nabla_2 k(x,\xi_2) \!-\! \nabla_2k(y, \xi_2)}_2 \\
  \leq & C_1  \norm{v}_{k}^2 \norm{x-y}_2 \left(\norm{x-y}_2 + \norm{\xi_1-\xi_2}_2\right)\\
   \leq & 2 C_1  \norm{v}_{k}^2 \norm{x-y}_2^2 
\end{align*}
with  $\xi_1 = (1-t_1) y + t_1 x$ for a $t_1 \in [0,1]$ and $\xi_2 = (1-t_2) x + t_2 y$ for a $t_2 \in [0,1]$. For the first inequality we utilized the Cauchy-Schwarz inequality and for the second inequality we used the first partial result of this proof. The third and fourth inequalities follow from the mean value theorem, with a zero term inserted for the latter. Hence 
\begin{align*}
C_1:= \sqrt{N} \max_{\bar{x},\bar{y} \in \conv (\Omega)} \max_{i=1,...,N} \max \{\norm{\partial_{i,1}\nabla_2 k(\bar{x},\bar{y})}_2,\norm{\partial_{i,2} \nabla_2 k(\bar{x},\bar{y})}_2 \},
\end{align*}
because  the maximum is reached since $\conv (\Omega)$ is compact and $k \in C^4(\conv (\Omega) \times \conv (\Omega)) $ with the Assumption \ref{ass:VfinRKHS}. The last inequality follows from the fact that
\begin{align*}
\norm{\xi_1\!-\!\xi_2}_2 = \norm{ (1\!-\!t_1) y + t_1 x \!-\! (1\!-\!t_2) x - t_2 y }_2 = \vert 1\!-\!t_1\!-\!t_2 \vert \norm{  y \!-\! x }_2 \leq  \norm{x\!-\!y}_2,
\end{align*}
which is a rough estimate. This shows that $s_v^n$ is Lipschitz  continuous, where the Lipschitz  constant is independent of the respective Hermite kernel surrogate for the VF. For the partial derivative, very similar steps can be performed:
\begin{align*}
&\vert\partial_{i} s_v^n(x) - \partial_{i} s_v^n(y)\vert^2  \\ = &\vert \langle s_v^n, \partial_{i,1} k(x,\cdot) - \partial_{i,1} k(y,\cdot) \rangle_k \vert^2 \\
 \leq &\norm{s_v^n}_k^2 \norm{\partial_{i,1} k(x,\cdot) - \partial_{i,1} k(y,\cdot)}_k^2 \\
\leq &\norm{v}_k^2 \left(\partial_{i,2} \partial_{i,1} k(x,x)-\partial_{i,2} \partial_{i,1} k(x,y) + \partial_{i,2}\partial_{i,1} k(y,y) - \partial_{i,2}\partial_{i,1} k(y,x) \right) \\
\leq & \norm{v}_k^2 \norm{x-y}_2 \norm{\nabla_2 \partial_{i,2} \partial_{i,1} k(x,\tilde{\xi}_1) - \nabla_2 \partial_{i,2} \partial_{i,1} k(y,\tilde{\xi}_2)}_2 \\
= & \norm{v}_k^2 \norm{x-y}_2  \norm{\nabla_2 \partial_{i,2} \partial_{i,1} k(x,\tilde{\xi}_1)\!-\!\nabla_2 \partial_{i,2} \partial_{i,1} k(x,\tilde{\xi}_2) \!+\!\nabla_2 \partial_{i,2} \partial_{i,1} k(x,\tilde{\xi}_2)\! -\! \nabla_2 \partial_{i,2} \partial_{i,1} k(y,\tilde{\xi}_2)}_2 \\
\leq & 2C_2 \norm{v}_k^2 \norm{x-y}_2^2, 
\end{align*}
where 
\begin{align*}
C_2\!:=\! \sqrt{ N}\!\!\! \max_{\bar{x},\bar{y}\in \conv (\Omega)} \!\max_{j,i=1,...,N}\! \max \{ \norm{ \partial_{j,1} \!\nabla_2  \partial_{i,2} \partial_{i,1} k(\bar{x},\bar{y})\!}_2\!,\!\norm{\!\partial_{j,2} \!\nabla_2 \partial_{i,2} \partial_{i,1} k(\bar{x},\bar{y})}_2 \}.
\end{align*}
Overall, the statement follows with $L =  \sqrt{2}( \sqrt{C_1}+ \sqrt{NC_2} ) \norm{v}_{k}$.
\end{proof}
\noindent Now we can   give a convergence result related to the VF and its derivative: 
\begin{theorem}\label{theorem:ConvVF}
Let Assumption \ref{ass:VfinRKHS} with $\Omega :=\mathcal{T}(\mathcal{A})$  and additionally $f,g \in C^1(\mathcal{T}(\mathcal{A}))$ hold, then for every $ T> 0 $ there is a constant $C_T >0$ independent of $\epsilon_{tol,d}$, $\epsilon_{tol,f}$ and the respective Hermite kernel interpolant $s_v^n$ for $ v $  such that
\begin{align*}
\max_{(t,x_0)  \in [0,T] \times \mathcal{A}} \big( &\vert v(\mathbf{x}^*(t;x_0)) - s_v^n(\mathbf{x}^*(t;x_0))\vert \\ +  &\norm{\nabla v(\mathbf{x}^*(t;x_0))-s_v^n(\mathbf{x}^*(t;x_0))}_{2} \big) \leq \epsilon_{tol,d} \,C_T   + \epsilon_{tol,f}
\end{align*}
if   Algorithm \ref{algo:greedy1} with $\epsilon_{tol,d}$ and  Algorithm \ref{algo:greedy2} with $\epsilon_{tol,f}$ have terminated. 
\end{theorem}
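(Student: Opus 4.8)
\noindent The plan is to control, at an arbitrary trajectory point $z := \mathbf{x}^*(t;x_0)$ with $(t,x_0)\in[0,T]\times\mathcal{A}$, the combined error
\begin{align*}
E(z) := \vert v(z) - s_v^n(z)\vert + \norm{\nabla v(z) - \nabla s_v^n(z)}_2,
\end{align*}
by comparing $z$ with a well-chosen data point $d\in\mathcal{D}$. Inserting $d$ and applying the triangle inequality splits $E(z)$ into three groups,
\begin{align*}
E(z) \le{}& \big(\vert v(z)-v(d)\vert + \norm{\nabla v(z) - \nabla v(d)}_2\big) \\
&{}+ \big(\vert v(d)-s_v^n(d)\vert + \norm{\nabla v(d) - \nabla s_v^n(d)}_2\big) \\
&{}+ \big(\vert s_v^n(d)-s_v^n(z)\vert + \norm{\nabla s_v^n(d) - \nabla s_v^n(z)}_2\big),
\end{align*}
and the whole argument reduces to estimating each group. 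The middle group is at most $\epsilon_{tol,f}$ for \emph{every} $d\in\mathcal{D}$: it vanishes on the centers selected by Algorithm \ref{algo:greedy2} and is bounded by $\epsilon_{tol,f}$ on the remaining data by that algorithm's stopping criterion. Thus it remains to bound the first and third groups by $\norm{z-d}_2$ and then to make $\norm{z-d}_2$ small through a clever choice of $d$.

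For the regularity, I would first note that Assumption \ref{ass:VfinRKHS} gives $v\in C^2(\mathcal{T}(\mathcal{A}))$ on the compact set $\mathcal{T}(\mathcal{A})$, so that $v$ and $\nabla v$ are Lipschitz there; let $L_v$ be a constant with $\vert v(x)-v(y)\vert + \norm{\nabla v(x)-\nabla v(y)}_2 \le L_v\norm{x-y}_2$. The third group is handled by Lemma \ref{lemma:surrogateLipschitz}, which crucially supplies a constant $L$ \emph{independent of} $n$ (hence of the surrogate) for the analogous bound on $s_v^n$. Combining, $E(z) \le \epsilon_{tol,f} + (L_v+L)\,\norm{z-d}_2$, and minimizing over $d$ yields $E(z) \le \epsilon_{tol,f} + (L_v+L)\,\mathrm{dist}(z,\mathcal{D})$.

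The crux is the bound $\mathrm{dist}(\mathbf{x}^*(t;x_0),\mathcal{D}) \le C\,\epsilon_{tol,d}$ uniformly on $[0,T]\times\mathcal{A}$. Upon termination of Algorithm \ref{algo:greedy1} the cover distance lies below $\epsilon_{tol,d}$, so there is a point $y\in\mathcal{D}$ with $\norm{x_0-y}_2\le\epsilon_{tol,d}$. Since $y$ lies on a computed optimal trajectory, say $y=\mathbf{x}^*(s;x_a)$, Bellman's optimality principle \eqref{eq:BPO} makes the tail optimal, i.e. $\mathbf{x}^*(t;y)=\mathbf{x}^*(s+t;x_a)\in\mathcal{D}$ for all $t\in[0,T]$; this is the point where one avoids having to place $x_0$ near a seed state. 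The closed-loop field $F_{\mathrm{cl}}(x)=f(x)-\tfrac12 g(x)R^{-1}g(x)^\top\nabla v(x)$ coming from \eqref{eq:feedback} is Lipschitz on $\mathcal{T}(\mathcal{A})$ because $f,g\in C^1$ and $\nabla v\in C^1$, so a Gr\"onwall estimate gives $\norm{\mathbf{x}^*(t;x_0)-\mathbf{x}^*(t;y)}_2 \le e^{L_{\mathrm{cl}}t}\norm{x_0-y}_2 \le e^{L_{\mathrm{cl}}T}\epsilon_{tol,d}$. Choosing $d=\mathbf{x}^*(t;y)$ then gives $\mathrm{dist}(z,\mathcal{D})\le e^{L_{\mathrm{cl}}T}\epsilon_{tol,d}$, and collecting constants yields the claim with $C_T=(L_v+L)e^{L_{\mathrm{cl}}T}$, which is independent of $\epsilon_{tol,d}$, $\epsilon_{tol,f}$ and $n$.

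The main obstacle I anticipate is precisely this last step: turning the covering of $\mathcal{A}$ produced by Algorithm \ref{algo:greedy1} into a covering of the whole trajectory bundle $\{\mathbf{x}^*(t;x_0)\}$ by $\mathcal{D}$. It hinges on two facts that must be carefully justified, namely the Lipschitz continuous dependence of optimal trajectories on their initial state (which is why the sharper regularity $v\in C^2$, and thus a Lipschitz feedback, is needed, and why the exponential factor $e^{L_{\mathrm{cl}}T}$ forces the $T$-dependence of $C_T$) and the semigroup/Bellman property ensuring that the continuation of the covering point $y$ is itself already contained in $\mathcal{D}$. The uniform-in-$n$ Lipschitz bound of Lemma \ref{lemma:surrogateLipschitz} is likewise essential, since otherwise $L$ and hence $C_T$ could degrade as the surrogate is refined.
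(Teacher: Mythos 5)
Your proposal is correct and follows essentially the same route as the paper's proof: locate a data point $\mathbf{x}^*(t_2;x_{0,2})\in\mathcal{D}$ within $\epsilon_{tol,d}$ of $x_0$ via the cover-distance stopping criterion, propagate the closeness of the two optimal trajectories along $[0,T]$ by a Gr\"onwall estimate for the Lipschitz closed-loop field $f-\tfrac12 gR^{-1}g^\top\nabla v$, and then split the error by the triangle inequality into the Lipschitz continuity of $v$ and $\nabla v$, the $n$-independent Lipschitz bound of Lemma \ref{lemma:surrogateLipschitz}, and the training-error bound $\epsilon_{tol,f}$ from Algorithm \ref{algo:greedy2}. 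The resulting constant $C_T=(L_v+L)e^{L_{\mathrm{cl}}T}$ matches the paper's $(L_v+L_{\nabla v}+L)e^{L_hT}$ up to how the Lipschitz constants of $v$ and $\nabla v$ are grouped.
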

\begin{proof}
First, we fix an $\mathbf{x}^*(t_1;x_{0,1}) \in \mathcal{T}(\mathcal{A})$ with $t_1 \leq T$ and $x_{0,1} \in  \mathcal{A}$. Due to the  selection criterion in Algorithm \ref{algo:greedy1}, we know that for the state $ x_ {0,1} $ in $\mathcal{A}$ there is a trajectory $\mathbf{x}^*(\, \cdot\,;x_{0,2}) \subset \mathcal{D}$ in the available data set and a time $t_2 \geq 0$ with
\begin{align}\label{eq:ProofConvVF1}
\norm{x_{0,1} -\mathbf{x}^*(t_2;x_{0,2}) }_{2} \leq \epsilon_{tol,d} .
\end{align}
Next, we define $h(x) := f(x) -\frac{1}{2}g(x)R^{-1}g(x)^{\top} \nabla v(x)$, which is Lipschitz continuous with constant $L_h$ on the compact set $\mathcal{T}(\mathcal{A})$ as $f,g$ and $\nabla v$ are differentiable. Thus,  the ODE of the optimal trajectory in integral form for any $x_0 \in \mathcal{A}$ becomes
\begin{align*}
\mathbf{x}^*(t;x_{0}) = x_0 + \int_{0}^t h(\mathbf{x}^*(s;x_{0}) ) \text{d}s.
\end{align*}
Using this we get
\begin{align*}
&\norm{\mathbf{x}^*(t_1;x_{0,1})-\mathbf{x}^*(t_1+t_2;x_{0,2})}_{2} \\ \leq  & \norm{x_{0,1}-\mathbf{x}^*(t_2;x_{0,2})}_{2} + \int_0^{t_1} \norm{ h(\mathbf{x}^*(s;x_{0,1}))-h(\mathbf{x}^*(s+t_2;x_{0,2}))}_{2} \,\text{d}s \\
\leq &\norm{x_{0,1}-\mathbf{x}^*(t_2;x_{0,2})}_{2} + L_h \int_0^{t_1} \norm{ \mathbf{x}^*(s;x_{0,1})-\mathbf{x}^*(s+t_2;x_{0,2})}_{2} \,\text{d}s,
\end{align*}
which leads with Grönwall's lemma, Equation \eqref{eq:ProofConvVF1} and $t_1\leq T$ to 
\begin{align}\label{eq:ProofConvVF2}
\norm{\mathbf{x}^*(t_1;x_{0,1})\!-\!\mathbf{x}^*(t_1+t_2;x_{0,2})}_{2}\! \leq\! \norm{x_{0,1}\!-\!\mathbf{x}^*(t_2;x_{0,2})}_{2} e^{L_h t_1} \!\leq \! \epsilon_{tol,d} e^{L_h T}.
\end{align}
Furthermore after the termination of Algorithm \ref{algo:greedy2}, we know that
\begin{align}\label{eq:ProofConvVF3}
&\vert v(\mathbf{x}^*(t_1+t_2;x_{0,2}))-s_v^n(\mathbf{x}^*(t_1+t_2;x_{0,2}))\vert \notag \\ + & \norm{\nabla v(\mathbf{x}^*(t_1+t_2;x_{0,2}))- \nabla s_v^n(\mathbf{x}^*(t_1+t_2;x_{0,2}))}_{2} \leq \epsilon_{tol,f}
\end{align}
as  $\mathbf{x}^*(\, \cdot\,;x_{0,2}) \subset \mathcal{D}$ is in the available data set. So, overall we get 
\begin{align*}
&\vert v(\mathbf{x}^*(t_1;x_{0,1}))\!-\!s_v^n(\mathbf{x}^*(t_1;x_{0,1}))\vert \!+\! \norm{\nabla v(\mathbf{x}^*(t_1;x_{0,1}))\!-\! \nabla s_v^n(\mathbf{x}^*(t_1;x_{0,1}))}_{2} \\
 = & \vert v(\mathbf{x}^*(t_1;x_{0,1}))\!-\!v(\mathbf{x}^*(t_1\!+\!t_2;x_{0,2}))\!+\!v(\mathbf{x}^*(t_1\!+\!t_2;x_{0,2}))\!-\!s_v^n(\mathbf{x}^*(t_1;x_{0,1}))\vert \\
 & +  \big\| \nabla v(\mathbf{x}^*(t_1;x_{0,1}))-\nabla v(\mathbf{x}^*(t_1+t_2;x_{0,2}))\\ &+\nabla v(\mathbf{x}^*(t_1+t_2;x_{0,2}))-\nabla s_v^n(\mathbf{x}^*(t_1;x_{0,1}))\big\|_2 \\
 \leq & (L_v + L_{\nabla v})e^{L_h T} \epsilon_{tol,d} \\
 & + \vert v(\mathbf{x}^*(t_1+t_2;x_{0,2}))-s_v^n(\mathbf{x}^*(t_1;x_{0,1}))\vert \\
& +  \norm{\nabla v(\mathbf{x}^*(t_1+t_2;x_{0,2}))-\nabla s_v^n(\mathbf{x}^*(t_1;x_{0,1}))}_{2} \\
 = & (L_v + L_{\nabla v})e^{L_h T} \epsilon_{tol,d} \\
 & + \vert v(\mathbf{x}^*(t_1+t_2;x_{0,2}))-s_v^n(\mathbf{x}^*(t_1+t_2;x_{0,2}))\\ &+s_v^n(\mathbf{x}^*(t_1+t_2;x_{0,2}))-s_v^n(\mathbf{x}^*(t_1;x_{0,1}))\vert \\
& +  \big\|\nabla v(\mathbf{x}^*(t_1+t_2;x_{0,2}))- \nabla  s_v^n(\mathbf{x}^*(t_1+t_2;x_{0,2})) \\ &+ \nabla  s_v^n(\mathbf{x}^*(t_1+t_2;x_{0,2}))-\nabla s_v^n(\mathbf{x}^*(t_1;x_{0,1}))\big\|_2 \\
 \leq & (L_v + L_{\nabla v})e^{L_h T} \epsilon_{tol,d} + \epsilon_{tol,f} \\
 & + \vert s_v^n(\mathbf{x}^*(t_1+t_2;x_{0,2}))-s_v^n(\mathbf{x}^*(t_1;x_{0,1})) \vert \\
& +  \norm{\nabla  s_v^n(\mathbf{x}^*(t_1+t_2;x_{0,2}))-\nabla s_v^n(\mathbf{x}^*(t_1;x_{0,1}))}_{2} \\
\leq & \underbrace{(L_v + L_{\nabla v} + L)e^{L_h T}}_{=:C_T} \epsilon_{tol,d}   + \epsilon_{tol,f},
\end{align*}
where we inserted zero terms for the first inequality and then used the triangle inequality, the Lipschitz continuity of $v$ (Lipschitz constant $L_v$) and $\nabla v$ (Lipschitz constant $L_{\nabla v}$ and Equation \eqref{eq:ProofConvVF2} to obtain the first inequality. For the second equality we inserted other zero terms and then utilize the triangle inequality, Lemma \ref{lemma:surrogateLipschitz} and Equation \eqref{eq:ProofConvVF2} to get the second inequality. The last inequality results from \eqref{eq:ProofConvVF3}.  Since $\mathbf{x}^*(t_1;x_{0,1})$ was chosen arbitrarily, the above inequality also holds for the maximum, and so the statement follows.
\end{proof}
\noindent Note that the latter theorem is valid for any fixed finite time horizon.  \\
\newline
\noindent The next result concerns the convergence of the surrogate controlled trajectory $\mathbf{x}_s$ to the optimal trajectory $\mathbf{x}^*$. In order to establish the convergence, it is required that the regularity of the VF applies to a slightly larger domain. Therefore, for a fixed $\delta>0$ we define the dilated set $\mathcal{T}_{\delta}(\mathcal{A}):= \mathcal{T}(\mathcal{A}) + \overline{B_{\delta}}$, where $\overline{B_{\delta}}$ is the closed ball centered at zero with radius $\delta$. 
\begin{theorem}\label{theorem:ConvTraj2}
Let Assumption \ref{ass:VfinRKHS} with $\Omega:= \mathcal{T}_{\delta}(\mathcal{A})$ hold for a $\delta>0$ and additionally $f,g \in C^1(\mathcal{T}_{\delta}(\mathcal{A}))$, then for every $ T> 0 $ there are two constants $\overline{C}_T >0$ and $\tilde{C}_T >0$ independent of $\epsilon_{tol,d}$, $\epsilon_{tol,f}$ and the respective Hermite kernel interpolant $s_v^n$ for $ v $  such that
\begin{align*}
 \max_{(t,x_0)  \in [0,T] \times \mathcal{A}} \norm{\mathbf{x}^*(t;x_0)-\mathbf{x}_s(t;x_0)}_{2} \leq \epsilon_{tol,d} \, \overline{C}_T  + \epsilon_{tol,f} \, \tilde{C}_T,
\end{align*}
if   Algorithm \ref{algo:greedy1} with $\epsilon_{tol,d}$ and  Algorithm \ref{algo:greedy2}  with $\epsilon_{tol,f}$ have terminated. Furthermore, it is assumed that the thresholds $\epsilon_{tol,d}, \epsilon_{tol,f}$ are so small that  $\epsilon_{tol,d} \, \overline{C}_T  + \epsilon_{tol,f} \, \tilde{C}_T < \delta$ holds.
\end{theorem}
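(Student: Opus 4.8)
The plan is to compare the two closed-loop dynamics by a Grönwall argument, using Theorem~\ref{theorem:ConvVF} to control the feedback error and Lemma~\ref{lemma:surrogateLipschitz} to transport that error off the optimal trajectory. By \eqref{eq:feedback} the optimal trajectory solves $\dot{\mathbf{x}}^* = h(\mathbf{x}^*)$ with $h(x) = f(x) - \frac{1}{2}g(x)R^{-1}g(x)^{\top}\nabla v(x)$ (the same $h$ as in the proof of Theorem~\ref{theorem:ConvVF}), while the surrogate-controlled trajectory solves $\dot{\mathbf{x}}_s = h_s(\mathbf{x}_s)$ with $h_s(x) = f(x) - \frac{1}{2}g(x)R^{-1}g(x)^{\top}\nabla s_v^n(x)$, and both start at $x_0$. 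Writing them in integral form and subtracting, I would estimate, for $\eta(t) := \norm{\mathbf{x}^*(t;x_0) - \mathbf{x}_s(t;x_0)}_2$,
\begin{align*}
\eta(t) \le \int_0^t \norm{h(\mathbf{x}^*(s)) - h(\mathbf{x}_s(s))}_2 \, \text{d}s + \int_0^t \norm{h(\mathbf{x}_s(s)) - h_s(\mathbf{x}_s(s))}_2 \, \text{d}s.
\end{align*}
On the compact set $\mathcal{T}_{\delta}(\mathcal{A})$ (compact by Lemma~\ref{lemma:compact} together with the closed ball) the map $h$ is Lipschitz with constant $L_h$, since $f,g \in C^1$ and $\nabla v$ is Lipschitz (from $v \in C^2$), so the first integrand is at most $L_h \eta(s)$. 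The second integrand equals $\frac{1}{2}\norm{g(\mathbf{x}_s)R^{-1}g(\mathbf{x}_s)^{\top}(\nabla v(\mathbf{x}_s) - \nabla s_v^n(\mathbf{x}_s))}_2 \le C_g \norm{\nabla v(\mathbf{x}_s(s)) - \nabla s_v^n(\mathbf{x}_s(s))}_2$, where $C_g$ bounds $\frac{1}{2}\norm{gR^{-1}g^{\top}}_2$ on $\mathcal{T}_{\delta}(\mathcal{A})$.

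The crux is that Theorem~\ref{theorem:ConvVF} only controls the feedback error along optimal trajectories, whereas here it is evaluated at $\mathbf{x}_s(s)$, which is not optimal. I would transfer the bound by inserting the optimal state $\mathbf{x}^*(s;x_0)$ at the same time $s$ and applying the triangle inequality:
\begin{align*}
\norm{\nabla v(\mathbf{x}_s(s)) - \nabla s_v^n(\mathbf{x}_s(s))}_2 \le & \norm{\nabla v(\mathbf{x}_s(s)) - \nabla v(\mathbf{x}^*(s))}_2 + \norm{\nabla v(\mathbf{x}^*(s)) - \nabla s_v^n(\mathbf{x}^*(s))}_2 \\ & + \norm{\nabla s_v^n(\mathbf{x}^*(s)) - \nabla s_v^n(\mathbf{x}_s(s))}_2.
\end{align*}
The middle term is at most $\epsilon_{tol,d}C_T + \epsilon_{tol,f}$ by Theorem~\ref{theorem:ConvVF} (since $s \le T$ and $x_0 \in \mathcal{A}$); the first is at most $L_{\nabla v}\eta(s)$ by Lipschitz continuity of $\nabla v$; and the third is at most $L\,\eta(s)$ by Lemma~\ref{lemma:surrogateLipschitz}. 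Collecting terms, the second integrand is bounded by $C_g\big((L_{\nabla v}+L)\eta(s) + \epsilon_{tol,d}C_T + \epsilon_{tol,f}\big)$, so that with $\tilde L := L_h + C_g(L_{\nabla v}+L)$ I obtain $\eta(t) \le \int_0^t \tilde L\,\eta(s)\,\text{d}s + T C_g(\epsilon_{tol,d}C_T + \epsilon_{tol,f})$ for $t \le T$. Grönwall's lemma then yields $\eta(t) \le TC_g e^{\tilde L T}(\epsilon_{tol,d}C_T + \epsilon_{tol,f})$, i.e. the claim with $\overline{C}_T := TC_g e^{\tilde L T} C_T$ and $\tilde{C}_T := TC_g e^{\tilde L T}$.

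I expect the main obstacle to be making the preceding estimates rigorous, since they all presuppose that $\mathbf{x}_s(s)$ remains inside $\mathcal{T}_{\delta}(\mathcal{A})$, where the constants $L_h, L_{\nabla v}, C_g, L$ are finite and where $\nabla v$ and $\nabla s_v^n$ are controlled, yet the bound on $\eta$ is exactly what guarantees this. This circularity is what the final hypothesis $\epsilon_{tol,d}\overline{C}_T + \epsilon_{tol,f}\tilde{C}_T < \delta$ is designed to break, via a continuation argument. Concretely, I would set $t^* := \sup\{\tau \in [0,T] : \eta(s) \le \delta \text{ for all } s \in [0,\tau]\}$; because $\mathbf{x}^*(s) \in \mathcal{T}(\mathcal{A})$, the condition $\eta(s) \le \delta$ forces $\mathbf{x}_s(s) \in \mathcal{T}_{\delta}(\mathcal{A})$, so all the estimates above are valid on $[0,t^*)$ and give $\eta(t) \le \epsilon_{tol,d}\overline{C}_T + \epsilon_{tol,f}\tilde{C}_T < \delta$ there. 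By continuity this strict bound persists at $t^*$ and on a slightly larger interval, which forces $t^* = T$; hence the estimate holds on all of $[0,T]$, and taking the maximum over $(t,x_0) \in [0,T]\times\mathcal{A}$ completes the proof.
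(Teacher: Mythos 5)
Your proposal is correct, and the overall skeleton (a Grönwall comparison of the two closed-loop flows, with Theorem~\ref{theorem:ConvVF} supplying the feedback error and the dilation $\mathcal{T}_{\delta}(\mathcal{A})$ providing the room needed to keep $\mathbf{x}_s$ in the set where everything is controlled) matches the paper. Two technical devices differ, though. First, the paper inserts $\overline{h}_{s_v^n}(\mathbf{x}^*(s))$ rather than $h(\mathbf{x}_s(s))$, so its perturbation term $h_v(\mathbf{x}^*)-\overline{h}_{s_v^n}(\mathbf{x}^*)=G(\mathbf{x}^*)\left(\nabla v(\mathbf{x}^*)-\nabla s_v^n(\mathbf{x}^*)\right)$ is evaluated directly on the optimal trajectory, where Theorem~\ref{theorem:ConvVF} applies with no further work; your decomposition places that term at $\mathbf{x}_s(s)$ and therefore needs the extra triangle-inequality transfer through $L_{\nabla v}$ and Lemma~\ref{lemma:surrogateLipschitz}, which simply folds into a larger (still surrogate-independent) Grönwall exponent. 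Second, and more substantively, the paper resolves the circularity you correctly identify (the estimates presuppose $\mathbf{x}_s(s)\in\mathcal{T}_{\delta}(\mathcal{A})$, yet the estimate is what guarantees this) by extending $h_{s_v^n}$ to a globally Lipschitz $\overline{h}_{s_v^n}$ via Kirszbraun's theorem, obtaining a globally defined auxiliary trajectory $\overline{\mathbf{x}}_s$ by Picard--Lindel\"of, proving the bound for that trajectory unconditionally, and only then identifying $\overline{\mathbf{x}}_s=\mathbf{x}_s$ because the bound keeps $\overline{\mathbf{x}}_s$ inside $\mathcal{T}_{\delta}(\mathcal{A})$ where the two right-hand sides agree. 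Your continuation argument with $t^*:=\sup\{\tau:\eta(s)\le\delta \text{ on } [0,\tau]\}$ achieves the same end by more classical means and also settles existence of $\mathbf{x}_s$ on $[0,T]$ along the way; the paper's extension trick buys an unconditional global solution up front at the price of invoking Kirszbraun, while yours stays elementary but must carry the bootstrap hypothesis through every estimate. Both are sound, and your constants are, as required, independent of $\epsilon_{tol,d}$, $\epsilon_{tol,f}$ and the interpolant.
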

\begin{proof}
We start by defining a function that summarizes the dynamics of the ODE systems under consideration for a given VF surrogate $e(x)$:
\begin{align*}
h_e(x):=  f(x) -\frac{1}{2}g(x)R^{-1}g(x)^{\top} \nabla e(x)
\end{align*}
Note that the function $h_{s_v^n}$ is Lipschitz continuous on $\mathcal{T}_{\delta}(\mathcal{A})$ where the Lipschitz constant $L_{h_s}$ is independent of the respective interpolant of the VF. This follows with the differentiability of $f,g$ and $\nabla v$. The independence of the constant from the surrogate follows as in Lemma \ref{lemma:surrogateLipschitz}. Due to Kirszbraun's theorem \cite{kirszbraun1934} there is a globally Lipschitz continuous function $\overline{h}_{s_v^n}: \mathbb{R}^N \longrightarrow \mathbb{R}^N$ with the same Lipschitz constant $L_{h_s}$ and $\overline{h}_{s_v^n}(x) = h_{s_v^n}(x)$ for $x\in \mathcal{T}_{\delta}(\mathcal{A})$. Next, an auxiliary trajectory $\overline{\mathbf{x}}_s(t;x_0)$ is defined which satisfies the ODE
\begin{align*}
\dot{\overline{\mathbf{x}}}_s(t;x_0) = \overline{h}_{s_v^n}(\overline{\mathbf{x}}_s(t;x_0))
\end{align*}
with $\overline{\mathbf{x}}_s(0;x_0) = x_0$. With the Picard-Lindelöf theorem, such a trajectory exists for all time intervals, due to the global Lipschitz continuity of $\overline{h}_{s_v^n}$. In a further step, we show that the auxiliary trajectory and the optimal  trajectory  are getting closer to each other as $\epsilon_{tol,d}$ and $\epsilon_{tol,f}$ decrease. Let us also use the definition $G(x) = -\frac{1}{2}g(x)R^{-1}g(x)^{\top}$ here to simplify. For a fixed $(t;x_0)$ with $t\leq T$, it holds
\begin{align*}
     & \norm{\mathbf{x}^*(t;x_0)\!-\!\overline{\mathbf{x}}_s(t;x_0)}_2 \\ 
\leq & \int_{0}^t\!\norm{ h_v(\mathbf{x}^*(t;x_0) )\! - \!\overline{h}_{s_v^n}(\overline{\mathbf{x}}_s(t;x_0))}_2\! \text{d}t \\
= & \int_{0}^t \! \norm{ h_v(\mathbf{x}^*(t;x_0) )\! -\!\overline{h}_{s_v^n}(\mathbf{x}^*(t;x_0) )\!+\!\overline{h}_{s_v^n}(\mathbf{x}^*(t;x_0) )\!-\! \overline{h}_{s_v^n}(\overline{\mathbf{x}}_s(t;x_0))}_2\! \text{d}t \\
\leq & \int_{0}^t \! \norm{ h_v(\mathbf{x}^*(t;x_0) )\! -\!\overline{h}_{s_v^n}(\mathbf{x}^*(t;x_0) )}_2\! \text{d}t \!+\! \int_{0}^t\! \norm{\overline{h}_{s_v^n}(\mathbf{x}^*(t;x_0) )\!- \!\overline{h}_{s_v^n}(\overline{\mathbf{x}}_s(t;x_0))}_2 \!\text{d}t \\
\leq & \int_{0}^t \! \underbrace{\norm{ h_v(\mathbf{x}^*(t;x_0) )\! -\!\overline{h}_{s_v^n}(\mathbf{x}^*(t;x_0) )}_2}_{=\norm{G(\mathbf{x}^*(t;x_0))( \nabla v(\mathbf{x}^*(t;x_0))\! -\! \nabla s_v^n(\mathbf{x}^*(t;x_0))}_2}\!\!\!\!\!\! \text{d}t\! +\! L_{h_s}\!\int_{0}^t \norm{\mathbf{x}^*(t;x_0) \!-\! \overline{\mathbf{x}}_s(t;x_0)}_2\! \text{d}t \\
\leq & (\epsilon_{tol,d} \,C_T   + \epsilon_{tol,f}) \max_{x\in \mathcal{T}(\mathcal{A})} \norm{G(x)}_2 T + L_{h_s}\!\int_{0}^t \!\norm{\mathbf{x}^*(t;x_0)\! -\! \overline{\mathbf{x}}_s(t;x_0)}_2 \!\text{d}t ,
\end{align*}
where we utilized Theorem \ref{theorem:ConvVF} for the last inequality. So with the Grönwall's inequality \cite{gronwall1919}, the definitions $ \overline{C}_T := C_T \max_{x\in \mathcal{T}(\mathcal{A})} \norm{G(x)}_2 T e^{L_{h_s} T}$ and $\tilde{C}_T := \max_{x\in \mathcal{T}(\mathcal{A})} \norm{G(x)}_2 T e^{L_{h_s} T}$,  it follows
\begin{align*}
\norm{\mathbf{x}^*(t;x_0)-\overline{\mathbf{x}}_s(t;x_0)}_2 \leq \epsilon_{tol,d} \, \overline{C}_T  + \epsilon_{tol,f} \, \tilde{C}_T \leq \delta. 
\end{align*}
Here the latter inequality comes from the assumption. The crucial point now is that because of the estimate 
\begin{align*}
  \norm{\mathbf{x}^*(t;x_0)-\overline{\mathbf{x}}_s(t;x_0)}_2  \leq \delta  
\end{align*}
it holds that $\overline{\mathbf{x}}_s(t;x_0) \in \mathcal{T}_{\delta}(\mathcal{A})$ for all $t\in[0,T]$ and therefore $\overline{\mathbf{x}}_s(t;x_0) =  \mathbf{x}_s(t;x_0)$ as $\overline{h}_{s_v^n}(x) = h_{s_v^n}(x)$ for $x\in \mathcal{T}_{\delta}(\mathcal{A})$. Thus, the statement follows as $(t;x_0)$ was chosen arbitrarily.
\end{proof}
\noindent From a practical point of view, the last theorem is very interesting because it gives a guarantee that the approximate optimal feedback control will provide near optimal trajectories if the Algorithms \ref{algo:greedy1} and \ref{algo:greedy2} have terminated for sufficiently small thresholds $\epsilon_{tol,d} $ and $\epsilon_{tol,f}$.
\section{Numerical experiments}
\noindent Three model problems are considered in this section. For each of them, a training data set is computed using  Algorithm \ref{algo:greedy1}. Because the optimal open-loop solution becomes slightly less accurate near zero, as indicated by the first model problem, all trajectories are truncated after time $T$. Algorithm \ref{algo:greedy2} generates the Hermite surrogate and the structured Hermite surrogate, adapting step 4 for the latter. For the former surrogate, we use the Wendland kernel $k(x,y) = \Phi_{N,m}(\gamma \norm{x-y}_2)$ with
\begin{align*}
\Phi_{N,m} = (1-r)_+^{l+2} \left[(l^2 + 4l + 3)r^2 + (3l + 6)r + 3\right],
\end{align*}
$l := \lfloor N \slash 2 \rfloor+3$ and $(1-r)_+ := \max\{1-r,0\}$ from  Corollary 9.14 in \cite{wendland2004}, which exactly satisfies the regularity assumptions in Assumption \ref{ass:VfinRKHS}. Here $1 \slash \gamma >0$ is the kernel width. We multiply this Wendland kernel by $\langle x,y \rangle^2$ as mentioned in  Section \ref{sec:strucHermite} to obtain the kernel for the structured Hermite surrogate. The VF surrogates are assessed with respect to a test error for the approximation of the VF and the quality of the resulting surrogate controlled trajectories. The required test set consists of 20 open-loop trajectories for which the initial states $\{x_{0,1},...,x_{0,20} \}$ are selected in $\mathcal{A}$ using a so-called geometric greedy procedure \cite{demarchi2005}.  To determine the quality of the feedback realized by the surrogate, we use a mean relative $L^2$-error  
\begin{align*}
MRL^2Error(\mathbf{x}_{\text{OL}},\mathbf{x}_{\text{SR}}) \!:=\! \frac{1}{20}\! \sum_{j=1}^{20} \!\sqrt{\frac{\sum_{i=0}^{N_j} \! \norm{\mathbf{x}_{\text{OL}}(t_{j,i}; x_{0,j})\!-\!\mathbf{x}_{\text{SR}}(t_{j,i}; x_{0,j})\!}^2_2 }{\sum_{i=0}^{N_j} \! \norm{\mathbf{x}_{\text{OL}}(t_{j,i}; x_{0,j})\!}^2_2}}
\end{align*}
for  partitionings $0 = t_{j,0} < t_{j,1} < ... t_{j,N_j} = T$, which are determined adaptively by the solver \emph{solve{\_}bvp} from SciPy, for $j=1,...,20$. It compares the optimal open-loop controlled trajectories $\mathbf{x}_{\text{OL}}$ of the test set with the surrogate controlled trajectories $\mathbf{x}_{\text{SR}}$. The latter is computed by solving the approximate optimal feedback control in closed-loop form
\begin{align*}
 \dot{\mathbf{x}}_{\text{SR}}(s;\!\! x_{0,j})\! = & f(  \mathbf{x}_{\text{SR}}(s; \!\!x_{0,j})) \\ &-\!\frac{1}{2} g(  \mathbf{x}_{\text{SR}}(s; \!\!x_{0,j}))R^{-1}\!g(\mathbf{x}_{\text{SR}}(s;\!\! x_{0,j}))^{\top} \nabla s_v(\mathbf{x}_{\text{SR}}(s; \!\!x_{0,j}))  
\end{align*}
with $\mathbf{x}(0) = x_{0,j}$ for all $j=1,...,20$ using SciPy's \textit{solve{\_}ivp}. This error is also used to determine the hyperparameter $\gamma$, which is realized with a  $s$-fold cross-validation, where the training set of the trajectories is divided into $s$ sets of complete trajectories. To allow a better comparison of the model problems with each other, we introduce the quantities $C_{\text{max},\mathcal{A}}:= \max_{x\in \mathcal{A}} \norm{x}_{2} $ and $C_{\text{max},v}:= \max_{x\in \mathcal{D}}\left(\norm{ v(x)}_{2}+  \norm{\nabla v(x)}_{2} \right) $ to specify a relative cover distance and training error in the following. All runtimes of the numerical experiments refer to a laptop with an AMD Ryzen 9 5900HX CPU and 16GB RAM.
\subsection*{An academic model problem}
The first model problem is an academic model problem (AMP), which is not application motivated but dimension-variable and has the advantage that the VF is known analytically, allowing exact error assessment. It has the form
\begin{gather}
\min_{\mathbf{u} \in \mathcal{U}_{\infty}} J(\mathbf{u}) = \min_{\mathbf{u} \in \mathcal{U}_{\infty}} \int_0^{\infty}  \alpha e^{\norm{\mathbf{x}(s)}_2^2} \langle \mathbf{x}(s),\mathbf{x}(s) \rangle_2^2 + \beta \left(\mathbf{u}(s)\right)^2 \,\text{d}s \label{eq:aca1} \\ 
\text{ subject to } \dot{\mathbf{x}}(s) \!= \! \norm{\mathbf{x}(s)}_2^2 \mathbf{x}(s) + e^{- \frac{\norm{\mathbf{x}(s)}_2^2}{2}} \mathbf{x}(s) \mathbf{u}(s)  \text{ and }\mathbf{x}(0) \!= \!x_0 \in \mathbb{R}^N  \label{eq:aca3}
\end{gather}
with controller dimension $M=1$. This model problem may also be useful for other studies. The HJB equation for this problem is
\begin{align}\label{eq:HJBAMC}
\langle \nabla v(x),x \rangle_2 \norm{x}_2^2  - \frac{e^{- \norm{x}_2^2}}{4 \beta} \langle \nabla v(x),x \rangle_2^2 +\alpha e^{\norm{x}_2^2} \norm{x}_2^4 = 0.
\end{align}
For the VF we chose the ansatz $v(x) = C e^{\norm{x}^2_2} -C$  and thus $ \nabla v(x) = 2 C e^{\norm{x}^2_2} x$. Inserting this into \eqref{eq:HJBAMC} gives
\begin{align*}
\left(2C - C^2 \frac{1}{ \beta}  +\alpha \right)  e^{\norm{x}^2_2}  \langle  x,x \rangle_2^2  = 0,
\end{align*}
which is solved for $C_{1 \backslash 2 } := \beta (1 \pm \sqrt{1+\alpha \slash \beta } )$. But only $C :=  \beta (1 + \sqrt{1+\alpha \slash \beta} )$ makes sense, because the VF must always be positive.  The optimal closed-loop system in terms of  \eqref{eq:aca1} is therefore
\begin{align*}
\dot{\mathbf{x}}^*(s) &= \left(\norm{\mathbf{x}^*(s)}_2^2  - \frac{1}{2 \beta} e^{- \norm{\mathbf{x}^*(s)}_2^2}  \langle \mathbf{x}^*(s), \nabla v(x) \rangle_2 \right) \mathbf{x}^*(s) \\
& = -\sqrt{1+\frac{\alpha}{\beta}} \norm{\mathbf{x}^*(s)}_2^2 \mathbf{x}^*(s),
\end{align*}
which is globally asymptotically stable, since a Lyapunov function is given by  $v$ (see \cite[Chapter 4]{Khalil2013} for more background information on Lyapunov stability). The standard linearization approach to get an approximation of the VF in a neighborhood of  zero, as required for the structured Hermite surrogate, is not possible for this problem, since $J_f(0) = 0$ and $g(0) = 0 $. Nevertheless, a quadratic approximation can also be obtained by truncating the Taylor series of the true VF:
\begin{align*}
v(x) \approx v(0) + \nabla v(0) (x-0) + (x-0)^{\top}J_{\nabla v}(0)(x-0) =   2C \norm{x}^2_2
\end{align*} 
The approximate optimum closed-loop system is therefore
\begin{align*}
\dot{\mathbf{x}}^*(s)  = \left(1-\frac{2C}{\beta} e^{-\norm{\mathbf{x}^*(s)}_2^2} \right)  \norm{\mathbf{x}^*(s)}_2^2 \mathbf{x}^*(s), 
\end{align*}
which is only guaranteed to be asymptotically stable if
\begin{align*}
\left(1-\frac{2C}{\beta} e^{-\norm{\mathbf{x}^*(s)}_2^2} \right) <0,
\end{align*}
which leads to 
\begin{align*}
 \norm{\mathbf{x}^*(s)}_2 < \sqrt{ \log\left(2+2\sqrt{1+\frac{\alpha}{\beta}}\right)}:=r_1,
\end{align*}
since  the VF $v$ is again a Lyapunov function on the Ball $B_{r_1}(0)$. \\
\newline
\noindent Note that the construction of the model problem works for arbitrary state dimension $N$. For the following experiments, we set $N=2$ and $\mathcal{A}:= [-1,1]^N$ for the set of initial states. Moreover, the design parameters of the problem are chosen to be $\alpha = 10^5$ and $\beta = 1$, such that the high ratio $\frac{\alpha}{\beta}$ ensures that the system quickly approaches zero. Fig.\ \ref{fig:AMPDataSet} shows the training (turquoise) and test (orange) data sets for the academic model problem, using 100 trajectories for the former, which then has a cover distance smaller than $\epsilon_{\text{tol,d}} \slash C_{\text{max},\mathcal{A}} = 1.3 \cdot 10^{-3}$ in Algorithm \ref{algo:greedy1}. The relative deviation of the  computed data to the values of the true VF and its gradient is smaller than $10^{-7}$. Without truncating the trajectories to $T=99$, it would be $10^{-5}$. Since only here the true VF is accessible, we keep this value for $T$ also for the other model problems. Another indicator of data quality is the fulfillment of the HJB equation along the trajectories. Here the largest deviation is smaller than $10^{-12}$. Since this is close to machine precision, the Hermite kernel interpolation can then also be interpreted as a collocation method of the HJB equation. The average time taken to compute an optimal open-loop trajectory is approximately 42 seconds. For cross validation, $s=10$ was used  to determine the kernel shape parameter. This yields $\gamma_{H}= 0.04$ for the Hermite surrogate. With a total number of 200, the centers are scattered like the red dots in Fig.\ \ref{fig:AMPDataSet} (left). Using Algorithm \ref{algo:greedy2}, the selection took 940 seconds and resulted in a relative  training error smaller than $\epsilon_{\text{tol,f}}\slash C_{\text{max},v} =  5.089 \cdot 10^{-4}$. For the structured Hermite surrogate, the cross validation leads to $\gamma_{SH}= 0.4$. Here the 200 centers are selected as the blue dots in Fig.\ \ref{fig:AMPDataSet} (left), which took 893 seconds. In this, the relative training error is smaller than $\epsilon_{\text{tol,f}}\slash C_{\text{max},v} =  6.798 \cdot 10^{-4}$.
\begin{figure}[ht]%
\centering
\includegraphics[width=0.9\textwidth]{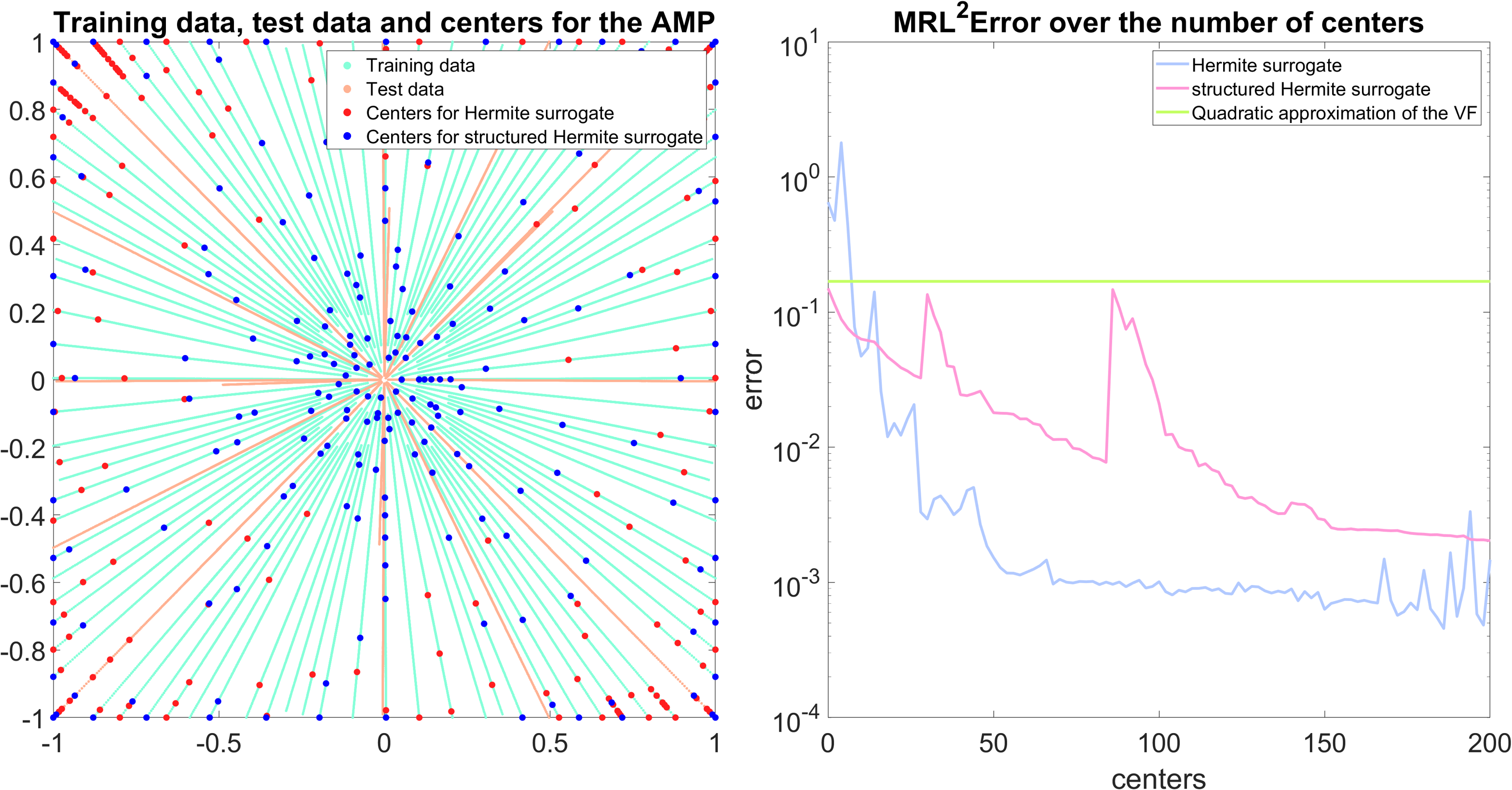}
\caption{The left diagram shows the training data, the test data and the centers chosen by Algorithm \ref{algo:greedy2} for the Hermite and the structured Hermite surrogate. On the right diagram we present a semilogy plot of the $MRL^2Error$ for the three types of surrogate over the number of centers selected.} \label{fig:AMPDataSet}
\end{figure}
In terms of the $MRL^2Error$ (see Fig.\ \ref{fig:AMPDataSet} right), it can be seen that both data-based Hermite surrogates produce smaller errors than the quadratic approximation, whose $MRL^2Error$ is around $1.688 \cdot 10^{-1} $. For a few centers, the structured Hermite surrogate is the best, as it gives a $MRL^2 error$ of $6.757 \cdot 10^{-2} $ after $8$ selected centers. However, it is quickly caught up by the Hermite surrogate, which then overall results in a lower  $MRL^2Error$. Its minimum is at $186$ centers with a value of $4.556 \cdot 10^{-4}$. The minimal $MRL^2Error$ for the structured Hermite surrogate is at  $200$ centers with a value of $2.033 \cdot 10^{-3}$. It is also clearly visible in Fig.\ \ref{fig:AMPDataSet} (right) that for both surrogates the error is beginning to stagnate, which is because there are trajectories in the test data that are not seen in the training (see Fig.\ \ref{fig:AMPDataSet} left). Therefore, the error could only be reduced by having data with a smaller $ \epsilon_{\text{tol,d}}$, which fits to the convergence results in   Theorem \ref{theorem:ConvTraj2}. The calculation of a surrogate control trajectory for an initial state in the test data set took on average $0.96$ seconds for the Hermite surrogate and $1.16 $  seconds for the structured Hermite surrogate. So here we have a considerable advantage compared to the runtime of $42$ seconds mentioned above for an optimal open-loop solution, while achieving almost the same result. 
\subsection*{Gripper-Soft-Tissue}
The second model problem is the Gripper Soft Tissue (GST). We only give a very concise description of the model, for more details we refer to \cite{ehring2021}. The GST is a two-dimensional physical domain model that describes a gripper that has gripped a soft tissue, such as a fruit or a piece of meat, and brings it to a prescribed target position while avoiding an obstacle.  Figure \ref{fig:fig1} shows a schematic representation. The governing equation of a linear elastic body is used to model the displacement field of the soft tissue. The gripper is modelled by a point mass whose displacement is described by Newton's second law.  An external force $\mathbf{u}(s)$ controls the gripper in this process. Consequently, the controller dimension is $M=2$. A semi-discretized version of the coupled system can be represented by an ODE system utilizing finite element methods. For this problem we choose $ N = 36 $.  The target position is the state with no displacement and no velocity. The state constraints for the obstacle placed in $(4,0)$ with a radius of $1.8$ are handled by an external penalty method. This means that, an additional term is introduced into the running payoff that makes states inside the obstacle very expensive and thus the optimal trajectory bypasses the obstacle. The OCP reads
\begin{gather*}\label{eq:MPfinal1} 
 \min_{\mathbf{u} \in \mathcal{U}_{\infty}} \int_0^{\infty} \mathbf{x}(s)^{\top}Q \mathbf{x}(s)  + \mathbf{x}(s)^{\top} \mathbf{x}(s) \sum_{i=1}^c e^{3.5 C_i(\mathbf{x}(s))}+ \mathbf{u}(s)^{\top} R \mathbf{u}(s) \,\text{d}s  \\ 
\text{ subject to } E \dot{\mathbf{x}}(s) = A\mathbf{x}(s)  + D \mathbf{u}(s) \text{ and }\mathbf{x}(0) = x_0, \label{eq:SCfinal1}
\end{gather*}
where the functions $(C_i)_{i=1}^c$  are positive if the $ i $th-node is in the obstacle and negative otherwise. Note that the problem without obstacle is a standard linear–quadratic regulator (LQR) whose solution can be computed by solving the algebraic Riccati equation. This was used to compute the quadratic approximation of the VF. 
\begin{figure}[ht]%
\centering
\includegraphics[width=0.6\textwidth]{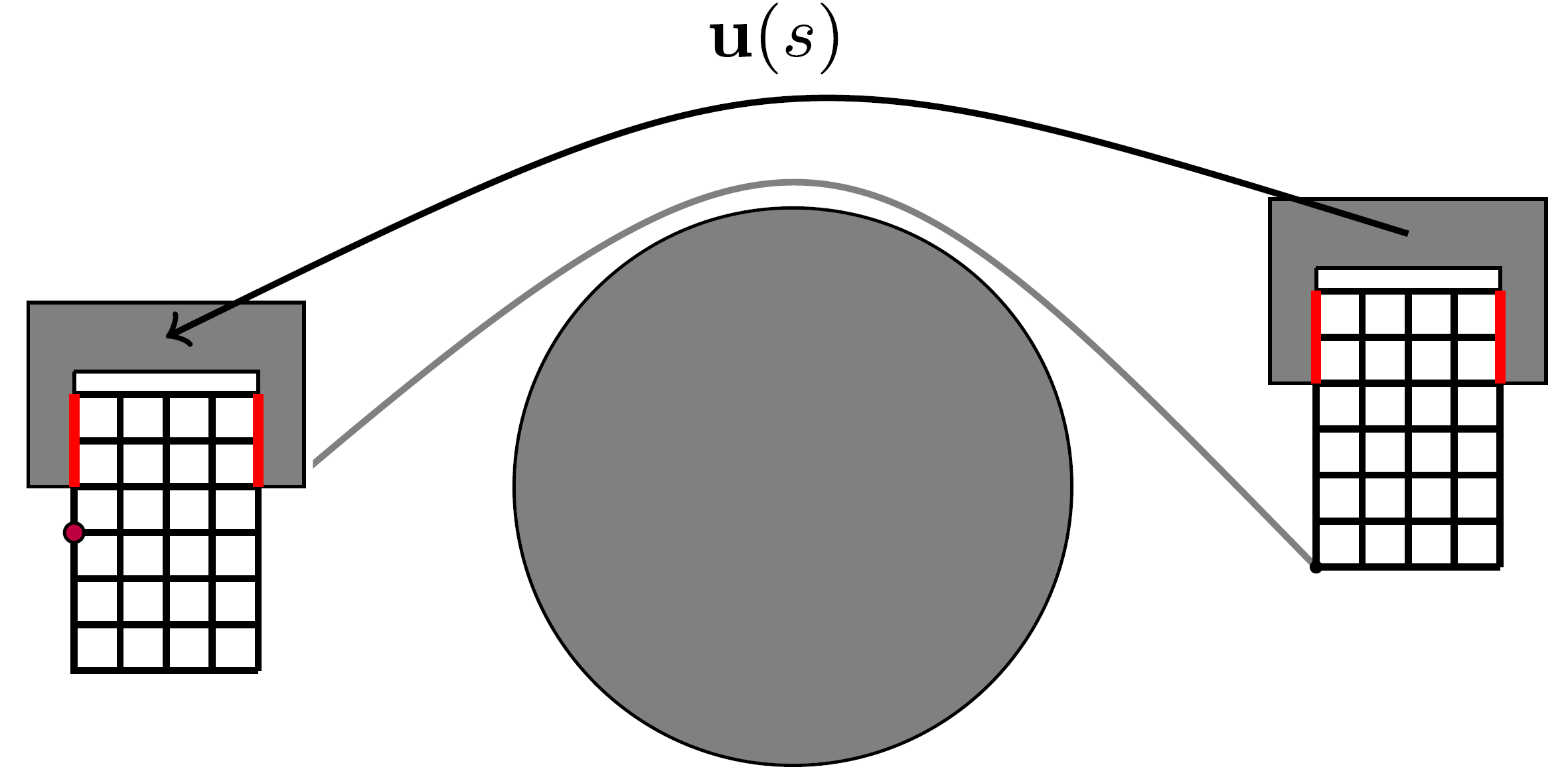}
\caption{A gripper that transports soft tissue to a target point while avoiding an obstacle. The purple dot indicates the node which is zero when the system is in a position with no displacement.}\label{fig:fig1}
\end{figure}
For reasons of dimensionality reduction, the set of initial states $\mathcal{A}$ is chosen to contain only states with constant displacement and velocity.  So there is no initial state with a deformed soft tissue. This results in four degrees of freedom for displacement and velocity in the $x$ and $y$ directions, which are in $[-2,10] \times [0 ,6] \times [-2,2] \times [-2,2]$ and where the states touching or inside the obstacle are removed. Note that only initial states that have their optimal path above the obstacle are included in this set. If we would allow trajectories below the obstactle, there would be uniqueness problems when computing the optimal solution with the necessary PMP conditions. Fig.\ \ref{fig:GSTDataSet} illustrates the training (turquoise) and test (orange) data set, showing only the position of the purple node from Fig.\ \ref{fig:fig1}. The training data set consists  of 250 trajectories, which have a cover distance smaller than $\epsilon_{\text{tol,d}}\slash C_{\text{max},\mathcal{A}} =1.2 \cdot 10^{-2}$ in  Algorithm \ref{algo:greedy1}. The maximum deviation from the HJB equation on the trajectories in the data set is smaller than $10^{-7}$ and the calculation of an optimal trajectory takes about $110$ seconds. For cross validation, $s=10$  results in $\gamma_{H}= 0.003$ for the Hermite surrogate and $\gamma_{SH}= 0.015$ for the structured Hermite surrogate. For both, 300 centers were selected, distributed as illustrated by the blue dots in Fig.\ \ref{fig:GSTDataSet} (left) for the Hermite surrogate and by the red dots for the structured Hermite surrogate. The training of the first surrogate took 1546 seconds resulting in a relative training error less than $\epsilon_{\text{tol,f}}\slash C_{\text{max},v}=1.466 \cdot 10^{-2}$. The second surrogate was trained in 1405 seconds and yields a relative training error less than $\epsilon_{\text{tol,f}} \slash C_{\text{max},v}=5.155\cdot10^{-3}$.
\begin{figure}[ht]
\centering
\includegraphics[width=0.9\textwidth]{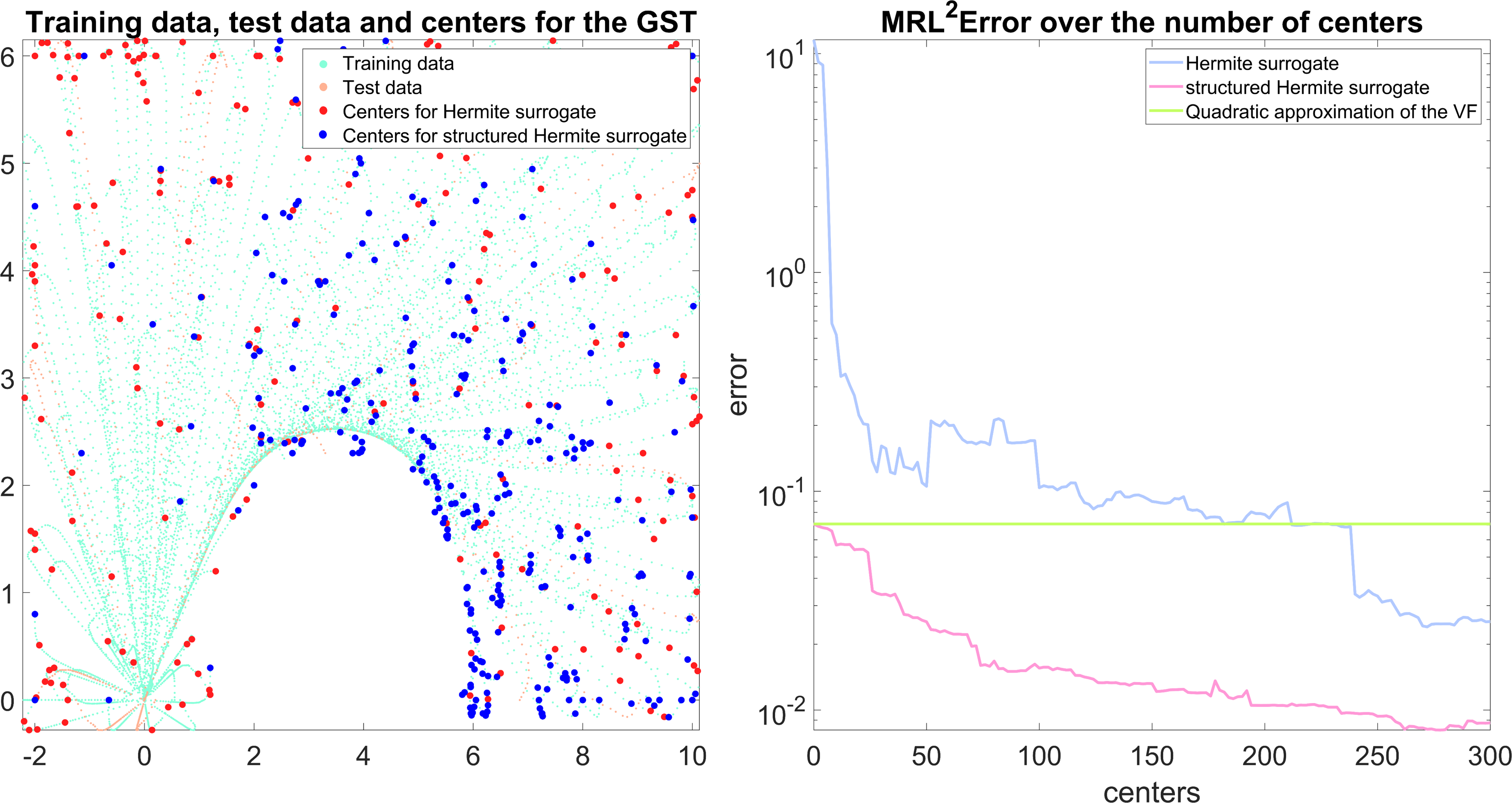}   
\caption{The left diagram shows the training data, the test data and the centers chosen by  Algorithm \ref{algo:greedy2} for the Hermite and the structured Hermite surrogate. The right diagram is a semilogy plot of the $MRL^2Error$ for the three types of surrogate over the number of centers selected.} 
\label{fig:GSTDataSet}
\end{figure}
\noindent  Looking at the $MRL^2Error$, we see that both surrogates can beat the control of the quadratic approximation, as the latter leads to an error of $7.088 \cdot 10^{-2} $. As this quadratic VF is the solution of the OCP without penalty term, the trajectories controlled by this surrogate do not avoid the obstacle. What is striking about this model problem is that the structured Hermite surrogate with a minimum $MRL^2Error$ of $8.099 \cdot 10^{-3}$ clearly outperforms the Hermite surrogate, which at its best has $2.399 \cdot 10^{-2} $. By design, the former starts with a much better error. In addition, very few centers are selected near the zero state as expected. Again, the error begins to stagnate for both surrogates, which can only be improved by reducing $\epsilon_{\text{tol,d}}$. Here, the computation of a surrogate controlled trajectory with an initial state in the test data set took on average $7.6 $ seconds for the Hermite surrogate and $2.8 $ seconds for the structured Hermite surrogate. Thus, also for this model problem, computing a trajectory based on the surrogate is much faster than computing an optimal open-loop solution with the PMP conditions, which took about $110$ seconds as mentioned above.
\subsection*{Nonlinear Heat equation}
The third model problem  we consider is a nonlinear heat equation (NHE) of Zeldovich type \cite[p.~2]{gilding2004}, which we adapted from \cite{Alla2023}. It is of the form 
\begin{align*}
\dot{y}(\xi,t) = \alpha \Delta y(\xi,t)+\beta(y^2(\xi,t)-y^3(\xi,t))+\Xi(\xi) u(\xi,t)
\end{align*}
for $(\xi,t) \in \Omega \times [0,\infty)$ with the boundary conditions
\begin{align*}
\partial_n y(\xi,t) = 0  \text{ for }(\xi,t) \in \partial \Omega \times  [0,\infty) \text{ and }
y(\xi,0) =  y_0(\xi)  \text{ for }\xi \in  \Omega, 
\end{align*}
$\Omega = (0,1) \times (0,1)$, $\alpha=5$ and $\beta=0.5$. Here $\Xi$ is the indicator function of the set $[0.25,0.75] \times [0.25,0.75]$.   The finite difference method can be used to semi-discretise the NHE. This leads to an ODE system involving the matrix $ A \in \mathbb{R}^{N \times N} $, which is the discretized version of the operator $ \alpha \Delta $ and the matrix $ B \in \mathbb{R}^{N \times M} $, that is the discretized version of $\Xi$. The quadratic and cubic terms become a component-wise application of this operation on the state. For the experiments we choose $ N = 100 $. This gives  $ M = 36 $.  The system has a stable equilibrium point in the constant one function and an unstable equilibrium point in the constant zero function. The goal of the control is to optimally steer the system to zero.  We fix the following OCP:
\begin{gather*}
\min_{\mathbf{u} \in \mathcal{U}_{\infty}} \int_0^{\infty} \norm{\mathbf{x}(s)}_{2} ^2 + 10^{-3}\norm{\mathbf{u}(s)}_{2}^2 \,\text{d}s   \\ 
\text{ subject to } \dot{\mathbf{x}}(s) = A\mathbf{x}(s)+ \beta(\mathbf{x}^2(s)-\mathbf{x}^3(s))+ B \mathbf{u}(s)   \text{ and }\mathbf{x}(0) = x_0 . 
\end{gather*}
\noindent We choose the space-continuous version of the set of initial states as
\begin{align*}
\{ a \sin(b\pi\xi_1)^2\sin(c\pi\xi_2)^2 &+ d \sin(e\pi\xi_1^2)^2\sin(f\pi\xi_2)^2 \\ &\,\,\,\vert a,b \in [-0.25,0.5] \text{ and } b,c,e,f \in \{1,2\} \}, 
\end{align*}
since the functions satisfy the Neumann boundary condition. We have to make this restriction for dimensionality reasons.  Then the set $\mathcal{A}$ is given by the spatially discretized version of these functions according to the finite difference grid. A meaningful visualization of the training and test data sets, as well as the centers selected for each of the two Hermite surrogates, is difficult for this model problem.  For illustrative purposes, Fig.\ \ref{fig:NHEDataSet} (left) only shows the course of the first optimal trajectory in the training data set. The latter  consists of 250 trajectories, with a cover distance smaller than $\epsilon_{\text{tol,d}}\slash C_{\text{max},\mathcal{A}} = 3.956 \cdot 10^{-2}$. Computing an optimal trajectory with PMP conditions took about $344$ seconds. The maximum deviation of the optimal trajectory from the HJB equation is smaller than $10^{-10}$. The cross validation  with $s=10$ resulted in  $\gamma_{H}= 0.002$ for the Hermite surrogate and $\gamma_{SH}= 0.02$ for the structured Hermite surrogate. Again, we use 200 centers for both model problems. For the Hermite kernel surrogate, the training phase took $2192$ seconds for the parameter  $\gamma_{H}$ and the relative training error fell below $\epsilon_{\text{tol,f}}\slash C_{\text{max},v}=7.326 \cdot 10^{-4}$. For the structured Hermite surrogate, the relative training error is less than $\epsilon_{\text{tol,f}}\slash C_{\text{max},v}=2.119\cdot 10^{-3}$. This took $2141$ seconds to train. 
\begin{figure}[ht]
\centering
\includegraphics[width=0.9\textwidth]{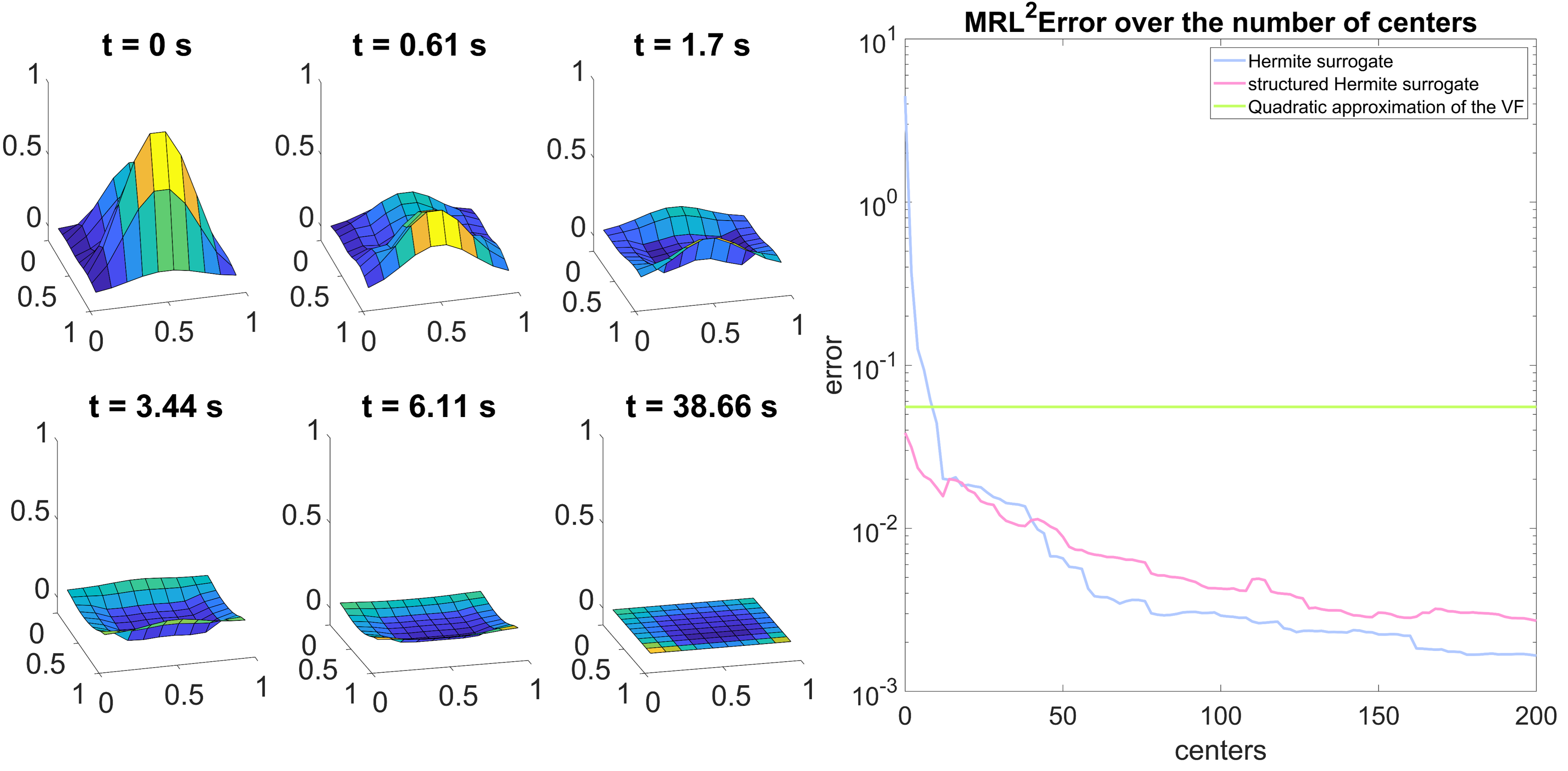} 
\caption{The left diagram shows the first trajectory in the training data set on six time instants. The right diagram presents a semilogy plot of the $MRL^2Error$ for the three types of surrogate over the number of centers selected.} 
\label{fig:NHEDataSet}
\end{figure}
In terms of the $MRL^2Error$, both surrogates quickly give better results than the quadratic approximation to the VF, which delivers $5.556 \cdot 10^{-2} $. In this model problem, the latter is constructed by linearizing the ODE and solving the resulting Algebraic Riccati equation. As with all model problems, the structured Hermite surrogate is better at the beginning, but similar to the first model problem, it is caught up by the Hermite surrogate. However, the structured Hermite surrogate gives a similar $MRL^2Error$ curve whose minimum value is $2.713 \cdot  10^{-3} $. The minimum $MRL^2Error$ value for the Hermite surrogate is $1.658 \cdot 10^{-3} $. It took on average $3.5 $ seconds to compute a surrogate controlled trajectory with an initial state in the test data set for the Hermite surrogate and $3.6 $  seconds for the structured Hermite surrogate, which again is much less than the $344$ seconds for computing an optimal open-loop controlled solution.  

\section{Conclusion}
A key element of our data-based approach is the generation of high quality VF data. This is achieved by a method that solves the infinite time horizon problem  by transforming the integration domain and adjusting the boundary conditions accordingly. An indicator that this method works very well is the small deviation of the data from the HJB equation of the infinite time horizon OCP. In fact, this was so small for some model problems that one could also interpret it as a collocation approach. Another aspect that makes Hermite interpolation possible in the first place is the matrix-free approach. Here, it should be emphasized that despite the $(N+1)n$ degrees of freedom of the system, the matrix-vector multiplication, which is the dominant operation in an iterative solution method, can be reduced from $O((N+1)^2n^2)$ to $O((N+1)n^2)$. This allows medium-dimensional problems, such as the NHE model problem, to be considered and the use of the selection criterion in the VKOGA algorithm, without incurring extreme runtimes. As outlook, to work with high-dimensional problems, one could use MOR. In the numerical experiments, we mainly discussed the quality of control by the surrogate via the $MRL^2Error$. It should be noted that the error between the real VF and the surrogate could accumulate at each control step in the surrogate closed-loop control. However, this is not observed in the numerical experiments, where not only the stability of this closed-loop is seen, but also that the solutions are nearly optimal. Moreover, these surrogate-controlled solutions can be computed much faster than optimal open-loop solutions when the surrogate is available, as the runtimes show, leading to a significant offline-online decomposition. Furthermore, the numerical experiments show that the use of the context-aware structured Hermite surrogate is worth trying when a quadratic approximation to the VF is available. This is motivated by the result for the GST model problem.  In general, this data-based approach could be extended to OCPs with finite time horizons, such as those found in MPC. In this, the VF is dependent on the initial time, which raises the question of how the surrogate should be designed to deal with this additional time variable.
\section*{Acknowledgements}
The authors gratefully acknowledge the financial support of this project by the  International Research Training Group  2198 (IRTG) ''Soft Tissue Robotics''. Further, we thank the Deutsche Forschungsgemeinschaft (DFG, German Research Foundation) for supporting this work by funding - EXC2075 – 390740016 under Germany's Excellence Strategy.

\bibliographystyle{abbrv}
\bibliography{bibtex} 

\end{document}